\newtheorem{theorem}{Theorem}[section]
\newtheorem{prop}[theorem]{Proposition}
\newtheorem{lemma}[theorem]{Lemma}
\newtheorem{coro}[theorem]{Corollary}
\newtheorem{prop-def}{Proposition-Definition}[section]
\theoremstyle{definition}
\newtheorem{defn}[theorem]{Definition}
\newtheorem{remark}[theorem]{Remark}
\newtheorem{exam}[theorem]{Example}
\newcommand{\nc}{\newcommand}
\nc{\delete}[1]{{}}
\nc{\mmargin}[1]{}
\nc{\mlabel}[1]{\label{#1}}  
\nc{\mcite}[1]{\cite{#1}}  
\nc{\mref}[1]{\ref{#1}}  
\nc{\mbibitem}[1]{\bibitem{#1}} 
	\nc{\mlabel}[1]{\label{#1}  
		{\hfill \hspace{1cm}{\bf{{\ }\hfill(#1)}}}}
	\nc{\mcite}[1]{\cite{#1}{{\bf{{\ }(#1)}}}}  
	\nc{\mref}[1]{\ref{#1}{{\bf{{\ }(#1)}}}}  
	\nc{\mbibitem}[1]{\bibitem[\bf #1]{#1}} 
 \font\cyrs=wncyr7
\newcommand{\bk}{{\mathbf{k}}}
\nc{\vep}{\varepsilon}
\nc{\bin}[2]{ (_{\stackrel{\scs{#1}}{\scs{#2}}})}  
\nc{\binc}[2]{(\!\! \begin{array}{c} \scs{#1}\\
		\scs{#2} \end{array}\!\!)}  
\nc{\bincc}[2]{  ( {\scs{#1} \atop
		\vspace{-1cm}\scs{#2}} )}  
\nc{\oline}[1]{\overline{#1}}
\nc{\mapm}[1]{\lfloor\!|{#1}|\!\rfloor}
\nc{\bs}{\bar{S}}
\nc{\la}{\longrightarrow}
\nc{\ot}{\otimes}
\nc{\rar}{\rightarrow}
\nc{\lon }{\,\rightarrow\,}
\nc{\dar}{\downarrow}
\nc{\dap}[1]{\downarrow \rlap{$\scriptstyle{#1}$}}
\nc{\defeq}{\stackrel{\rm def}{=}}
\nc{\dis}[1]{\displaystyle{#1}}
\nc{\dotcup}{\ \displaystyle{\bigcup^\bullet}\ }
\nc{\hcm}{\ \hat{,}\ }
\nc{\hts}{\hat{\otimes}}
\nc{\hcirc}{\hat{\circ}}
\nc{\lleft}{[}
\nc{\lright}{]}
\nc{\curlyl}{\left \{ \begin{array}{c} {} \\ {} \end{array}
	\right .  \!\!\!\!\!\!\!}
\nc{\curlyr}{ \!\!\!\!\!\!\!
	\left . \begin{array}{c} {} \\ {} \end{array}
	\right \} }
\nc{\longmid}{\left | \begin{array}{c} {} \\ {} \end{array}
	\right . \!\!\!\!\!\!\!}
\nc{\ora}[1]{\stackrel{#1}{\rar}}
\nc{\ola}[1]{\stackrel{#1}{\la}}
\nc{\scs}[1]{\scriptstyle{#1}} \nc{\mrm}[1]{{\rm #1}}
\nc{\dirlim}{\displaystyle{\lim_{\longrightarrow}}\,}
\nc{\invlim}{\displaystyle{\lim_{\longleftarrow}}\,}
\nc{\dislim}[1]{\displaystyle{\lim_{#1}}} \nc{\colim}{\mrm{colim}}
\nc{\mvp}{\vspace{0.3cm}} \nc{\tk}{^{(k)}} \nc{\tp}{^\prime}
\nc{\ttp}{^{\prime\prime}} \nc{\svp}{\vspace{2cm}}
\nc{\vp}{\vspace{8cm}}
\nc{\modg}[1]{\!<\!\!{#1}\!\!>}
\nc{\intg}[1]{F_C(#1)}
\nc{\lmodg}{\!<\!\!}
\nc{\rmodg}{\!\!>\!}
\nc{\cpi}{\widehat{\Pi}}
\nc{\ssha}{{\mbox{\cyrs X}}} 
\nc{\tsha}{{\mbox{\cyrt X}}}
\nc{\shpr}{\diamond}    
\nc{\labs}{\mid\!}
\nc{\rabs}{\!\mid}
\nc{\ad}{\mrm{ad}}
\nc{\ann}{\mrm{ann}}
\nc{\Aut}{\mrm{Aut}}
\nc{\bim}{\mbox{-}\mathsf{Bimod}}
\nc{\br}{\mrm{bre}}
\nc{\can}{\mrm{can}}
\nc{\Cont}{\mrm{Cont}}
\nc{\rchar}{\mrm{char}}
\nc{\cok}{\mrm{coker}}
\nc{\de}{\mrm{dep}}
\nc{\dtf}{{R-{\rm tf}}}
\nc{\dtor}{{R-{\rm tor}}}
\nc{\Div}{{\mrm Div}}
\nc{\Diff}{\mrm{DA}}
\nc{\Diffl}{\mathsf{DA}_\lambda}
\nc{\diffo}{{\mathsf{DO}_\lambda}}
\nc{\alg}{\mathsf{Alg}}
\nc{\End}{\mrm{End}}
\nc{\Ext}{\mrm{Ext}}
\nc{\Fil}{\mrm{Fil}}
\nc{\Fr}{\mrm{Fr}}
\nc{\Frob}{\mrm{Frob}}
\nc{\Gal}{\mrm{Gal}}
\nc{\GL}{\mrm{GL}}
\nc{\Hom}{\mrm{Hom}}
\nc{\Hoch}{\mrm{Hoch}}
\nc{\hsr}{\mrm{H}}
\nc{\hpol}{\mrm{HP}}
\nc{\id}{\mrm{id}}
\nc{\im}{\mrm{im}}
\nc{\Id}{\mrm{Id}}
\nc{\ID}{\mrm{ID}}
\nc{\Irr}{\mrm{Irr}}
\nc{\incl}{\mrm{incl}}
\nc{\length}{\mrm{length}}
\nc{\NLSW}{\mrm{NLSW}}
\nc{\Lie}{\mrm{Lie}}
\nc{\mchar}{\rm char}
\nc{\mpart}{\mrm{part}}
\nc{\ql}{{\QQ_\ell}}
\nc{\qp}{{\QQ_p}}
\nc{\rank}{\mrm{rank}}
\nc{\rcot}{\mrm{cot}}
\nc{\rdef}{\mrm{def}}
\nc{\rdiv}{{\rm div}}
\nc{\rtf}{{\rm tf}}
\nc{\rtor}{{\rm tor}}
\nc{\res}{\mrm{res}}
\nc{\SL}{\mrm{SL}}
\nc{\Spec}{\mrm{Spec}}
\nc{\tor}{\mrm{tor}}
\nc{\Tr}{\mrm{Tr}}
\nc{\tr}{\mrm{tr}}
\nc{\wt}{\mrm{wt}}
\nc{\bfk}{{\bf k}}
\nc{\bfone}{{\bf 1}}
\nc{\bfzero}{{\bf 0}}
\nc{\detail}{\marginpar{\bf More detail}
	\noindent{\bf Need more detail!}
	\svp}
\nc{\gap}{\marginpar{\bf Incomplete}\noindent{\bf Incomplete!!}
	\svp}
\nc{\FMod}{\mathbf{FMod}}
\nc{\Int}{\mathbf{Int}}
\nc{\Mon}{\mathbf{Mon}}
\nc{\remarks}{\noindent{\bf Remarks: }}
\nc{\Rep}{\mathbf{Rep}}
\nc{\Rings}{\mathbf{Rings}}
\nc{\Sets}{\mathbf{Sets}}
\nc{\ob}{\mathsf{Ob}}
\nc{\BA}{{\mathbb A}}   \nc{\CC}{{\mathbb C}}
\nc{\DD}{{\mathbb D}}   \nc{\EE}{{\mathbb E}}
\nc{\FF}{{\mathbb F}}   \nc{\GG}{{\mathbb G}}
\nc{\HH}{{\mathbb H}}   \nc{\LL}{{\mathbb L}}
\nc{\NN}{{\mathbb N}}   \nc{\PP}{{\mathbb P}}
\nc{\QQ}{{\mathbb Q}}   \nc{\RR}{{\mathbb R}}
\nc{\TT}{{\mathbb T}}   \nc{\VV}{{\mathbb V}}
\nc{\ZZ}{{\mathbb Z}}   \nc{\TP}{\widetilde{P}}
\nc{\m}{{\mathbbm m}}
\nc{\cala}{{\mathcal A}}    \nc{\calc}{{\mathcal C}}
\nc{\cald}{\mathcal{D}}     \nc{\cale}{{\mathcal E}}
\nc{\calf}{{\mathcal F}}    \nc{\calg}{{\mathcal G}}
\nc{\calh}{{\mathcal H}}    \nc{\cali}{{\mathcal I}}
\nc{\call}{{\mathcal L}}    \nc{\calm}{{\mathcal M}}
\nc{\caln}{{\mathcal N}}    \nc{\calo}{{\mathcal O}}
\nc{\calp}{{\mathcal P}}    \nc{\calr}{{\mathcal R}}
\nc{\cals}{{\mathcal S}}    \nc{\calt}{{\Omega}}
\nc{\calv}{{\mathcal V}}    \nc{\calw}{{\mathcal W}}
\nc{\calx}{{\mathcal X}}
\nc{\fraka}{{\mathfrak a}}
\nc{\frakb}{\mathfrak{b}}
\nc{\frakg}{{\frak g}}
\nc{\frakl}{{\frak l}}
\nc{\fraks}{{\frak s}}
\nc{\frakB}{{\frak B}}
\nc{\frakm}{{\frak m}}
\nc{\frakM}{{\frak M}}
\nc{\frakp}{{\frak p}}
\nc{\frakW}{{\frak W}}
\nc{\frakX}{{\frak X}}
\nc{\frakS}{{\frak S}}
\nc{\frakA}{{\frak A}}
\nc{\frakx}{{\frakx}}
\nc{\lir}[1]{\textcolor{red}{\underline{Li:}#1 }}
\begin{document}

\title[Differential algebras]{Cohomologies, extensions and deformations of differential algebras with any weights}

\author{Li Guo}
\address{
Department of Mathematics and Computer Science,
         Rutgers University,
         Newark, NJ 07102}
\email{liguo@rutgers.edu}
\author{Yunnan Li }
\address{School of Mathematics and Information Science, Guangzhou University,   Guangzhou 510006, China}
\email{ynli@gzhu.edu.cn}
\author{Yunhe Sheng}
\address{Department of Mathematics, Jilin University, Changchun 130012, Jilin, China}
\email{shengyh@jlu.edu.cn}

\author{Guodong Zhou}
\address{  School of Mathematical Sciences, Shanghai Key Laboratory of PMMP,
  East China Normal University,
 Shanghai 200241,
   China}
\email{gdzhou@math.ecnu.edu.cn}

\date{\today}

\begin{abstract}
As an algebraic study of differential equations, differential algebras have been studied for a century and and become an important area of mathematics. In recent years the area has been expended to the noncommutative associative and Lie algebra contexts and to the case when the operator identity has a weight in order to include difference operators and difference algebras. This paper provides a cohomology theory for differential algebras of any weights.  This gives a uniform approach to both the zero weight case which is similar to the earlier study of differential Lie algebras, and the non-zero weight case which poses new challenges. As applications, abelian extensions of a differential algebra are classified by the second cohomology group. Furthermore, formal  deformations of differential algebras are obtained and the rigidity of a differential algebra is characterized by the vanishing of the second cohomology group.
\end{abstract}

\subjclass[2010]{
16E40   
16S80   
12H05   
12H10   
16W25   
16S70   
}

\keywords{cohomology, extension, deformation, differential algebra, difference algebra, derivation}

\maketitle

\tableofcontents

\allowdisplaybreaks

\section{Introduction}

This paper studies the cohomology theory, abelian extensions and formal deformations for differential algebras of any weights.

\subsection{Differential algebras, old and new}
Classically, a differential algebra is a commutative algebra equipped with a linear operator satisfying the Leibniz rule, modeled after the differential operator in analysis. In fact, the origin of differential algebras is the algebraic study of differential equations pioneered by Ritt in the 1930s~\cite{Rit,Rit1}. Through the work of many mathematicians~\cite{Kol,Mag,PS} in the following decades, the subject has been fully developed into a vast area in mathematics, comprising of differential Galois theory, differential algebraic geometry and differential algebraic groups, with broad connections to other areas in mathematics such as arithmetic geometry and logic, as well as computer science (mechanical proof of geometric theorems) and mathematical physics (renormalization in quantum field theory)~\cite{CM,FLS,MS,Wu,Wu2}.

Another broadly used notion of differential operators is the derivations on (co)chain complexes. There the operator $d$ is assumed to satisfy the nilpotent condition $d^2=0$, in which case a differential algebra induces an associative diassociative algebra structure~\mcite{Lo01}, in analog to the case of Rota-Baxter algebras that induce dendriform algebras and tridendriform algebras.

In recent years, differential algebras without the commutative or nilpotent conditions have been considered, to include naturally arisen algebras such as path algebras and to have a more meaningful differential Lie algebra theory generalizing the classical relationship between associative algebras and Lie algebras~\cite{GL,Poi,Poi1}. In~\mcite{Loday} differential algebra is studied from an operadic point of view. Differential algebras have also been applied to control theory and gauge theory through the BV-formalism~\mcite{AKT,Va}.
In~\cite{ATW}, a notion of differential algebras were generalized to non(anti)commutative superspace by deformation in the study of instantons in string theory.

In another direction, the Leibniz rule is generalized to include the difference quotient $\frac{f(x+\lambda)-f(x)}{\lambda}$ before taking the limit $\lambda\mapsto 0$, leading to the notion of a differential algebra of weight $\lambda$~\cite{GK}. This generalized notion of differential algebra provides a framework for a uniform approach of the differential algebra (corresponding to the case when $\lambda=0$ and another extensively studied algebraic structure, the difference algebra (corresponding to the case when $\lambda=1$)~\cite{Coh,Lev,PS1}, as an algebraic study of difference equations. This notion also furnishes an algebraic context for the study of quantum calculus~\mcite{KC}. Differential operators with weights on Virasoro algebras were also investigated~\mcite{LGG}.

\subsection{Homology and deformations of differential algebras}
As noted above, nilpotent differential operators are fundamental notions for complexes to define cohomology which in turn plays a key role in deformation theory, either for specific algebraic structures, starting with the seminal works of Gerstenhaber for associative algebras and of Nijenhuis and Richardson for Lie algebras~\mcite{Ge0,Ge,NR}, or for the general context of operads, culminated in the monographs~\mcite{MSS,LV}.
As a further step in this direction, studies of deformations and the related cohomology have recently emerged for algebras with linear operators, including Rota-Baxter operators  and differential operators on Lie algebras~\mcite{TBGS,TFS}.

The importance of differential (associative) algebras makes it compelling to develop their cohomology theory, in both the zero weight case and nonzero weight case. The natural role played by nilpotent differential operators in cohomology makes it even more fascinating in this study. The purpose of this paper is to develop such a theory for differential (associative) algebras of any weight, and give its applications in the study of abelian extensions and formal deformations of differential algebras.

In comparison with the recent work~\mcite{TFS} on cohomology and deformation of differential Lie algebras, we note that the derivations there are of weight zero in which case its approach can be adapted for differential (associative) algebras. Our main emphasis on differential algebras in this paper is for the nonzero weight case, which is needed in order to study difference operators and difference algebras, but for which a different approach has to be taken. See comments in the outline below and Remark~\mref{rk:nonzero}.

\subsection{Layout of the paper}

The paper is organized as follows. In Section \ref{sec:bimod}, we introduce the notion of a bimodule over a differential algebra of nonzero weight, and provide its characterization in terms of a monoid object in slice categories.

A differential algebra is the combination of the underlying algebra and the differential operator. In this light we build the cohomology theory of a differential algebra by combining its components from the algebra and from the differential operator.
Thus in Section  \ref{sec:cohomologydo}, we establish the cohomology theory for differential operators of any weights, which is quite different from the one for the underlying algebra unless the weight is zero.
In Section \ref{sec:cohomologydasub}, we combine the Hochschild cohomology for associative algebras and the just established cohomology for differential operators of any weights to define the cohomology of differential algebras of any weights, with the cochain maps again posing extra challenges when the weight is not zero.
Finally in Section \ref{sec:cohomologyrelation}, we establish a close relationship among these cohomologies.
More precisely, we show that there is a short exact sequence of cochain complexes for the algebra, the differential operator and the differential algebra. The resulting long exact sequence gives linear maps from the cohomology groups of the differential algebra to those of the algebra, with the error terms (kernels and cokernels) controlled by the cohomology groups of the differential operator.

As applications and further justification of our cohomology theory for differential algebras, in Section \ref{sec:ext}, we apply the theory to study abelian extensions of differential algebras of any weights, and show that abelian extensions are classified by the second cohomology group of the differential algebras.

Further, in Section \ref{sec:def}, we apply the above cohomology theory to study formal deformations of differential algebras of any weights. In particular, we show that if  the second cohomology group of a differential algebra with coefficients in the regular representation is trivial, then this differential algebra is rigid.

\smallskip

\noindent
{\bf Notation.}
Throughout this paper, $\bk$ denotes a field of characteristic zero. All the vector spaces, algebras, linear maps and tensor products are taken over $\bk$ unless otherwise specified.

\section{Differential algebras and their bimodules} \label{sec:bimod}
This section gives background on differential algebras and first results on their bimodules,  with an interpretation in the general context of monoid objects in slice categories~\mcite{DMZ}.

\subsection{The category of bimodules over differential algebras}

\begin{defn} (\cite{GK})
	Let $\lambda\in \bk$ be a  fixed element. A  {\bf differential algebra of weight $\lambda$} (also called a  {\bf $\lambda$-differential algebra}) is an associative   algebra $A$ together with a  linear operator $d_A : A\rar A$ such that
	\begin{equation}\label{diff}
	d_A(xy)=d_A(x)y+x d_A(y)+\lambda d_A(x)d_A(y), \quad\forall x,y\in A.
	\end{equation}
	If $A$ is unital, it further requires that
	\begin{equation}\label{difu}
	d_A(1_A)=0.
	\end{equation}
	Such an operator is called a {\bf differential operator of weight $\lambda$} or a  {\bf derivation of weight $\lambda$}. It is also called a  {\bf $\lambda$-differential operator} or a  {\bf $\lambda$-derivation}.

Given two differential algebras $(A, d_A),\,(B,d_B)$ of the same  weight $\lambda$, a {\bf homomorphism of differential algebras} from $(A,d_A)$ to $(B,d_B)$ is an algebra homomorphism $\varphi:A\lon B$ such that $\varphi\circ d_A=d_B\circ\varphi$.
We denote by $\Diffl$ the category of $\lambda$-differential  algebras.
\end{defn}

To simply notations, for all the above notions, we will often suppress the mentioning of the weight $\lambda$ unless it needs to be specified.

Recall that a bimodule of an associative algebra $A$ is a triple $(V,\rho_l,\rho_r)$, where $V$ is a vector space, $\rho_l: A\to \mathrm{End}_\bk(V),~ x\mapsto (v\mapsto xv)$ and $\rho_r: A\to \mathrm{End}_\bk(V),~ x\mapsto (v\mapsto vx)$ are homomorphism and anti-homomorphism of associative algebras respectively such that $(xv)y=x(vy)$ for all $x,y\in A$ and $v\in V$.

\begin{defn}
 Let $(A, d_A)$ be a differential algebra.
 \begin{itemize}
   \item[{\rm (i)}] A {\bf bimodule} over the differential algebra $(A, d_A)$ is a quadruple $(V,\rho_l,\rho_r,d_V)$, where $d_V\in \mathrm{End}_\bk(V)$, and  $(V,\rho_l,\rho_r)$ is a bimodule over the associative algebra $A$, such that for all $x,y\in A, v\in V,$
   the following equalities hold:
   \begin{eqnarray*}
     d_V(xv)&=&d_A(x)v+xd_V(v)+\lambda d_A(x)d_V(v),\\
      d_V(vx)&=&v d_A(x)+d_V(v)x+\lambda d_V(v)d_A(x).
   \end{eqnarray*}

   \item[{\rm (ii)}] Given two bimodules $(U,\rho^U_l,\rho^U_r, d_U),\,(V,\rho^V_l,\rho^V_r, d_V)$ over  $(A, d_A)$, a  linear map $f:U\lon V$ is called a {\bf homomorphism} of bimodules, if $f\circ d_U=d_V\circ f$ and $$f\circ\rho^U_l(x)=\rho^V_l(x)\circ f,\quad f\circ\rho^U_r(x)=\rho^V_r(x)\circ f,\quad \forall x\in A.$$
 \end{itemize}
\end{defn}

We denote by  $(A,d_A)\bim$  the category of bimodules over the differential algebra $(A, d_A)$.

\begin{exam}
Any differential algebra $(A, d_A)$ is a bimodule over itself with
$$\rho_l:A\to \mathrm{End}_\bk(A),\,x\mapsto(y\mapsto xy),\quad \rho_r: A\to \mathrm{End}_\bk(A),\,x\mapsto(y\mapsto yx).$$ It is called the {\bf regular bimodule} over the differential algebra $(A, d_A)$.
\end{exam}

It is straightforward to obtain the following result.

\begin{prop}
  Let $(V,\rho_l,\rho_r, d_V)$ be a bimodule of the  differential algebra $(A, d_A)$. Then $(A\oplus V, d_A\oplus d_V)$ is a differential algebra, where the associative algebra structure on $A\oplus V$ is given by
  $$
  (x+u)(y+v)=xy+xv+uy,\quad \forall x,y\in A,~u,v\in V.
  $$
\end{prop}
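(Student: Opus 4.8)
The plan is to verify directly that the ``semidirect product'' data $(A\oplus V, d_A\oplus d_V)$ satisfies the axioms in the definition of a differential algebra of weight $\lambda$. Since this is a standard split extension construction (the trivial-extension algebra $A\ltimes V$ together with the obvious operator), the proof will be a routine but instructive computation, and I would present it in two stages corresponding to the two layers of structure: first the associative algebra structure, then the compatibility of $d_A\oplus d_V$ with the multiplication.

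First I would recall (or quickly check) that $A\oplus V$ with the product $(x+u)(y+v)=xy+xv+uy$ is an associative algebra: associativity unwinds to the bimodule axioms for $(V,\rho_l,\rho_r)$ over the associative algebra $A$, namely $(xu)y = x(uy)$ together with the module/anti-module conditions, all of which are part of the hypothesis that $(V,\rho_l,\rho_r,d_V)$ is a bimodule over $(A,d_A)$. (If $A$ is unital one also notes $1_A$ remains the identity, since $V\cdot V=0$.) This step is essentially bookkeeping.

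The substantive step is to check the Leibniz-type identity~\eqref{diff} for $D:=d_A\oplus d_V$ on $A\oplus V$. Writing $X=x+u$, $Y=y+v$ with $x,y\in A$ and $u,v\in V$, I would expand $D(XY) = d_A(xy) + d_V(xv+uy)$ and, using bilinearity of everything in sight and the fact that $V\cdot V=0$, compare it with $D(X)Y + X\,D(Y) + \lambda D(X)D(Y)$. The $A$-component reduces exactly to~\eqref{diff} for $(A,d_A)$. The $V$-component splits as the sum of the $xv$-part and the $uy$-part, and these reduce precisely to the two displayed bimodule-derivation identities for $d_V$, namely $d_V(xv)=d_A(x)v+xd_V(v)+\lambda d_A(x)d_V(v)$ and $d_V(vx)=vd_A(x)+d_V(v)x+\lambda d_V(v)d_A(x)$ (the latter applied with the roles of the pair read off from $uy$); the cross terms involving $u\cdot v$ vanish because $V$ is a square-zero ideal. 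Finally, if $A$ is unital, $D(1_A) = d_A(1_A) = 0$, so the unital condition~\eqref{difu} also holds.

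I do not anticipate any real obstacle here: the only thing to be careful about is matching the two halves of the $V$-component of $D(XY)$ with the correct one of the two bimodule-derivation axioms, and keeping track that all terms quadratic in $V$ (such as those that would involve $d_V(u)v$) drop out by the square-zero structure of $V$. Thus the proof is a direct verification, with the bimodule axioms used for associativity and the two bimodule-derivation identities used for the weight-$\lambda$ Leibniz rule.
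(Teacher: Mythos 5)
Your verification is correct and is exactly the routine check the paper intends: it states this proposition without proof as ``straightforward,'' and your componentwise argument (associativity from the bimodule axioms, the weight-$\lambda$ Leibniz rule split into the $A$-component giving \eqref{diff} for $d_A$ and the $V$-component giving the two derivation-compatibility identities for $d_V$, with all terms quadratic in $V$ vanishing since $V\cdot V=0$, plus $d(1_A)=0$ in the unital case) is the standard direct computation. No gaps.
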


\subsection{Differential bimodules in terms of monoid objects in slice categories}
We now show that the above definition of bimodules for differential algebras coincides with the notion obtained by applying monoid objects in certain slice categories.

\subsubsection{Monoid objects in slice categories}
We first recall some general concepts~\mcite{DMZ}.
\begin{defn}
For a category $\calc$ and an object $A$ in $\calc$. The {\bf slice category} $\calc/A$ is the category whose
\begin{itemize}
\item objects $(B,\pi)$ are $\calc$-morphisms $\pi:B\,\rightarrow\, A,B\in\calc$, and
\item morphisms $(B',\pi')\stackrel{f}{\,\rightarrow\,}(B'',\pi'')$ are commutative diagrams of $\calc$-morphisms:
\[\begin{CD}
B'@>f>>B''\\
  @V\pi' VV   @VV\pi''V  \\
A@=A.
\end{CD}\]
\end{itemize}
\end{defn}

\begin{defn}
 Let $\calc$ be a category with finite products and a terminal object $T$. A {\bf monoid} object in $\calc$ is an object $X\in \ob(\calc)$ together with two morphisms $\mu:X\times X\lon X$ and $\eta:T\lon X$ such that following diagrams commute:
\begin{itemize}
\item the {\bf associativity} of $\mu:$
\[\begin{CD}\label{neut}
X\times X\times X@>\mu\times {\Id}_X>>X\times X\\
  @V{\Id}_X\times \mu VV   @VV\mu V  \\
X\times X@>>\mu >X,
\end{CD}\]
\item the {\bf neutrality} of $\eta:$
\[\begin{CD}
X\times X@>\mu>>           X        @<\mu<<                             X\times X        \\
  @A{\Id}_X\times \eta AA    @|                                   @AA\eta\times {\Id}_X A    \\
X\times T@<<({{\Id}_X},t_X) < X        @>>(t_X,{{\Id}_X})>                    T\times X,
\end{CD}\]
\end{itemize}
where $t_X:X\lon T$ is the unique morphism.

Let $\calc_\m$ be the category whose objects are monoid objects $(X,\mu,\eta)$ in $\calc$ as above and the hom-set $\Hom_{\calc_\m}((X,\mu,\eta),(X',\mu',\eta'))$ is the set of all $f\in\Hom_{\calc}(X,X')$ for which $\mu'\circ (f\times f)=f\circ \mu$ and $\eta'=f\circ\eta$.
\end{defn}

\subsubsection{The differential algebra case}
Fix a  differential  algebra $(A,d)$, and consider the slice category $\Diffl/A$. The terminal object in $\Diffl/A$ is $T=A\stackrel{\Id}{\lon}A$. Given
$(A_1,d_1),\,(A_2,d_2)\in\Diffl$ and $X_1=(A_1,\varphi_1),\,X_2=(A_2,\varphi_2)\in\Diffl/A$, the product $X_1\times X_2$ is given by $(A_1\times_A A_2,\bar{\varphi})$, where
\[A_1\times_A A_2:=\{(a,a')\in A_1\times A_2\,|\,\varphi_1(a)=\varphi_2(a')\},\]
and
\[\bar{\varphi}:A_1\times_A A_2\lon A,\quad (a,a')\mapsto \varphi_1(a).\]
For any $(A',d')\in\Diffl$, $X=A'\stackrel{\varphi}{\lon}A\in\Diffl/A$ is
a monoid object if and only if there exist differential algebra homomorphisms $M:A'\times_A A'\lon A'$ and $\iota:A\lon A'$ such that
\begin{align*}
 \bar{\varphi}=\varphi M,\quad \varphi\iota=\Id_A,\quad M(M(a,b),c)=M(a,M(b,c)),\quad M(\iota(\varphi(a)),b)=b,\quad M(a,\iota(\varphi(b)))=a.
\end{align*}
for any $(a,b,c)\in A'\times_A A'\times_A A'$. Consequently,
\begin{align*}
M(a,a')&=M(a-\iota(\varphi(a)),0)+M(0,a'-\iota(\varphi(a')))
+M(\iota(\varphi(a)),\iota(\varphi(a')))\\
&=a-\iota(\varphi(a))+a'-\iota(\varphi(a'))
+\iota(\varphi(a'))\\
&=a+a'-\iota(\varphi(a)),
\end{align*}
for any $(a,a')\in A'\times_A A'$. Let $V:=\ker\varphi$ as a differential ideal of $A'$. Then $A'=\iota(A)\oplus V$. Since $M$ is also a differential algebra homomorphism, we have
\[uv=M(u,0)M(0,v)=M((u,0)(0,v))=M(0,0)=0,\]
for any $u,v\in V$. On the other hand, as $\iota d=d'\iota$ and $\varphi d'=d\varphi$, it is clear that $V$ is an $A$-bimodule with differential $d_V:=d'|_V$ by letting
\[\rho_l(x)v=\iota(x)v,\,\rho_r(x)v=v\iota(x),\]
for any $x\in A,\,v\in V$.

Conversely, given any bimodule $(V,d_V)$ of differential algebra $(A,d_A)$, one can define a differential algebra structure on $A\oplus V$ naturally with differential operator $(d_A,d_V)$ by letting
\[(x,u)(y,v)=(xy,xv+uy),\quad \forall x,y\in A,\,u,v\in V.\]
 We say this differential algebra the {\bf semi-direct product} of $A$ and $V$, and denote it as $(A\ltimes V,d_\ltimes)$.
Let $p:A\ltimes V\lon A$ be the canonical projection, and associate it with
\[(A\ltimes V)\times_A(A\ltimes V)=\{((x,u),(x,v))\in (A\ltimes V)\times(A\ltimes V)\,|\,x\in A,\,u,v\in V\}.\]
Define differential algebra homomorphisms $$M_\ltimes:(A\ltimes V)\times_A(A\ltimes V)\lon A\ltimes V,\,((x,u),(x,v))\mapsto(x,u+v)$$
and
$$i_\ltimes:A\lon A\ltimes V,\,x\mapsto(x,0).$$
Then the following lemma is easy to check.
\begin{lemma}
For any $A$-bimodule $V$, $X=A\ltimes V\stackrel{p}{\lon}A$ with morphisms $\mu_\ltimes,\eta_\ltimes$ determined by $M_\ltimes,i_\ltimes$ respectively is a monoid object in $\Diffl/A$.
\end{lemma}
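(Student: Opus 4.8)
The plan is to prove the lemma in two stages: first confirm that $M_\ltimes$ and $i_\ltimes$ genuinely define morphisms in the slice category $\Diffl/A$ (so that they induce $\mu_\ltimes$ and $\eta_\ltimes$), and then verify the two monoid axioms. Since the preceding discussion has already identified the categorical product in $\Diffl/A$ as the fibre product over $A$ (carrying the componentwise algebra and differential structure) and the terminal object as $T=(A\stackrel{\Id}{\lon}A)$, both stages reduce to short, explicit element-level computations.

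\textbf{Stage 1 (the structure maps are morphisms).} First I would record that the source of $M_\ltimes$ is the differential subalgebra
\[
(A\ltimes V)\times_A(A\ltimes V)=\{((x,u),(x,v))\mid x\in A,\ u,v\in V\}\subseteq (A\ltimes V)\times(A\ltimes V),
\]
with product and differential operator inherited componentwise. Then I would check in turn that $M_\ltimes\colon((x,u),(x,v))\mapsto(x,u+v)$ is $\bk$-linear (immediate); multiplicative (this is the one computation worth writing out: expand $((x,u),(x,v))((y,u'),(y,v'))$ componentwise using the semi-direct product rule $(x,u)(y,v)=(xy,xv+uy)$, apply $M_\ltimes$, and compare with $(x,u+v)(y,u'+v')$ — both equal $(xy,xu'+xv'+uy+vy)$); compatible with the differentials, $M_\ltimes\circ(d_\ltimes\times d_\ltimes)=d_\ltimes\circ M_\ltimes$ (this follows from additivity of $d_V$, since $M_\ltimes$ is the identity on the $A$-slot and addition on the $V$-slot); and a morphism over $A$, i.e.\ $p\circ M_\ltimes$ equals the structure map of the product $(A\ltimes V)\times_A(A\ltimes V)\lon A$. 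The corresponding statements for $i_\ltimes\colon x\mapsto(x,0)$ are all immediate from $(x,0)(y,0)=(xy,0)$, $d_\ltimes(x,0)=(d_Ax,0)$ and $p\circ i_\ltimes=\Id_A$; in particular $i_\ltimes$ is a morphism $T\lon X$, which is exactly what a unit morphism must be.

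\textbf{Stage 2 (the monoid axioms).} For associativity I would identify $X\times_A X\times_A X$ with triples $((x,u),(x,v),(x,w))$ and observe that both composites $M_\ltimes\circ(M_\ltimes\times\Id_X)$ and $M_\ltimes\circ(\Id_X\times M_\ltimes)$ send such a triple to $(x,u+v+w)$, so the associativity square commutes. For neutrality I would use that the unique morphism $t_X\colon X\lon T$ is $p$ itself, and chase $(x,u)$ around the left branch of the neutrality diagram: $(\Id_X,t_X)$ sends it to $((x,u),x)\in X\times_A T$, then $\Id_X\times\eta_\ltimes$ sends it to $((x,u),i_\ltimes(x))=((x,u),(x,0))$, and $\mu_\ltimes=M_\ltimes$ returns $(x,u+0)=(x,u)$; the right branch is symmetric, using $(x,0)$ in the first slot. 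Hence both composites equal $\Id_X$, which is the neutrality condition. Combining the two stages shows that $(X,\mu_\ltimes,\eta_\ltimes)$ is a monoid object in $\Diffl/A$.

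I do not expect a genuine obstacle here: the only non-formal computation is the multiplicativity of $M_\ltimes$ in Stage~1, and even that is a one-line expansion. The only point that deserves care is confirming that the pullbacks $X\times_A X$ and $X\times_A X\times_A X$ carry precisely the differential-algebra structure used in the element chases above, but this was already pinned down when the product in $\Diffl/A$ was described — which is why the statement is recorded as an easy lemma.
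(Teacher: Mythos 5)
Your verification is correct and is exactly the routine element-level check that the paper leaves to the reader (it states only that the lemma ``is easy to check''): $M_\ltimes$ and $i_\ltimes$ are differential algebra morphisms over $A$, and associativity and neutrality follow from the componentwise description of the fibre products. Nothing further is needed.
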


\begin{theorem}
 The functors $\xymatrix@=2em{(\Diffl/A)_\m \ar@/^/[r]^-{\ker} &(A,d_A)\bim\ar@/^/[l]^-{\ltimes} }$ induce an equivalence of categories.
 \mlabel{thm:monoid}
\end{theorem}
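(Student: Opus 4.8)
The plan is to show that the two functors $\ker \colon (\Diffl/A)_\m \to (A,d_A)\bim$ and ${\ltimes} \colon (A,d_A)\bim \to (\Diffl/A)_\m$ are mutually quasi-inverse by constructing natural isomorphisms $\ltimes \circ \ker \cong \Id$ and $\ker \circ \ltimes \cong \Id$. Most of the raw material has already been assembled in the paragraphs preceding the theorem: the computation that any monoid object $X = (A' \stackrel{\varphi}{\lon} A, M, \iota)$ in $\Diffl/A$ forces $M(a,a') = a + a' - \iota(\varphi(a))$, that $V := \ker\varphi$ is a differential ideal with trivial internal multiplication $uv = 0$, and that $V$ carries the $(A,d_A)$-bimodule structure $\rho_l(x)v = \iota(x)v$, $\rho_r(x)v = v\iota(x)$, $d_V = d'|_V$; conversely the Lemma just above records that $A \ltimes V \stackrel{p}{\lon} A$ is a monoid object. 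So the theorem is really the assertion that these two passages are inverse up to coherent isomorphism.

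First I would verify the easy composite: starting from a bimodule $(V,\rho_l,\rho_r,d_V)$, forming $A \ltimes V$ with projection $p$, and taking $\ker p$, one recovers $V$ with exactly the original structure maps, since $\ker p = \{(0,v) : v \in V\} \cong V$ and under this identification $\iota_\ltimes(x)\cdot(0,v) = (x,0)(0,v) = (0,xv)$, etc.; this identification is evidently natural in $V$, giving $\ker \circ \ltimes \cong \Id_{(A,d_A)\bim}$. Next I would treat the other composite: starting from a monoid object $X = (A' \stackrel{\varphi}{\lon} A, M, \iota)$, set $V = \ker\varphi$; because $A' = \iota(A) \oplus V$ as a differential algebra (using $\varphi \iota = \Id_A$, $\iota d = d'\iota$, $\varphi d' = d\varphi$) and the multiplication on $A'$ restricted to $V$ is zero while $\iota$ is an algebra map, the linear bijection $A \ltimes V \to A'$, $(x,v) \mapsto \iota(x) + v$, is an isomorphism of differential algebras over $A$; one then checks it carries $M_\ltimes$ to $M$ and $i_\ltimes$ to $\iota$, i.e. it is an isomorphism of monoid objects, and that it is natural in $X$. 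That naturality is where I expect the only real bookkeeping: a morphism of monoid objects $f \colon X_1 \to X_2$ in $\Diffl/A$ must be shown to restrict to a bimodule homomorphism $\ker\varphi_1 \to \ker\varphi_2$ (it sends $\ker\varphi_1$ into $\ker\varphi_2$ because $\varphi_2 f = \varphi_1$, and it commutes with the $\rho_l, \rho_r, d$ because $f\iota_1 = \iota_2$ and $f d_1' = d_2' f$) and then that the square relating the two isomorphisms commutes.

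The main obstacle, such as it is, is not conceptual but is making sure every claim that was asserted "by a short computation" in the run-up to the theorem is actually used correctly: in particular that $M$ being a \emph{differential} algebra homomorphism (not merely an algebra homomorphism) is what forces $d_V$ to be well defined and compatible — here is where the weight $\lambda$ enters, through the defining identity $d_V(xv) = d_A(x)v + x d_V(v) + \lambda d_A(x) d_V(v)$, which must be seen to follow from $d'(\iota(x) v) = d'(\iota(x)) v + \iota(x) d'(v) + \lambda d'(\iota(x)) d'(v)$ after projecting to $V$ and using $d' \iota = \iota d_A$. I would organize the proof as: (1) fix notation and recall the two constructions; (2) prove $\ker p \cong V$ naturally; (3) prove $A \ltimes \ker\varphi \cong A'$ as monoid objects over $A$, checking compatibility of $M, \iota$; (4) check naturality of both isomorphisms in morphisms; (5) conclude by the standard criterion that a pair of functors with such natural isomorphisms is an adjoint equivalence. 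Since all the verifications are direct diagram chases of the kind already carried out above, I would present steps (2)–(4) compactly and leave the most routine identities to the reader.
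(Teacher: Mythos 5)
Your proposal is correct and takes essentially the same route as the paper, which likewise treats $\ker\circ\ltimes$ as the identity on $(A,d_A)\bim$ and exhibits a natural isomorphism $\Id_{(\Diffl/A)_\m}\cong\ltimes\circ\ker$ via $a\mapsto(\varphi(a),a-\iota(\varphi(a)))$, exactly the inverse of your map $(x,v)\mapsto\iota(x)+v$, followed by the same compatibility and naturality checks. One minor slip in your prose: the weight-$\lambda$ compatibility of $d_V$ follows from $d'$ being a $\lambda$-derivation of $A'$ together with $d'\iota=\iota d_A$ (the monoid multiplication $M$ being an algebra map is instead what forces $uv=0$ on $V=\ker\varphi$), but this does not affect the argument.
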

\begin{proof}
First note that $\ker\circ\,\ltimes=\Id_{(A,d_A)\bim}$. Define $\Psi:\Id_{(\Diffl/A)_\m}\lon\ltimes\circ\ker$ by letting
\[\Psi_X:(A'\stackrel{\varphi}{\lon}A,M,\iota)\lon (A\ltimes\ker \varphi\stackrel{p}{\lon}A,M_\ltimes,i_\ltimes),\quad a\mapsto(\varphi(a),a-\iota(\varphi(a))).\]
for any $X=A'\stackrel{\varphi}{\lon}A\in(\Diffl/A)_\m$.

By definition $\varphi=p\circ\Psi_X$, thus $\Psi_X$ is a morphism in $(\Diffl/A)_\m$. The inverse of $\Psi_X$ is
\[\Psi_X^{-1}:(A\ltimes\ker \varphi\stackrel{p}{\lon}A,M_\ltimes,i_\ltimes)\lon(A'\stackrel{\varphi}{\lon}A,M,\iota),\quad (x,u)\mapsto u+\iota(x).\]
Also, for any $(a,a')\in A'\times_A A'$, $x\in A$, we have
\begin{eqnarray*}
M_\ltimes(\Psi_X(a),\Psi_X(a'))
&=&M_\ltimes((\varphi(a),a-\iota(\varphi(a))),(\varphi(a'),a'-\iota(\varphi(a'))))\\
&=&(\varphi(a),a+a'-2\iota(\varphi(a)))\\
&=&\Psi_X(a+a'-\iota(\varphi(a)))=\Psi_X(M(a,a')),\\
\Psi_X(\iota(x))&=&(\iota(x),\iota(x)-\iota(\varphi(\iota(x))))=(\iota(x),0)=i_\ltimes(x),\\
\Psi_X(d'(a))&=&(\varphi(d'(a)),d'(a)-\iota(\varphi(d'(a))))=(d(\varphi(a)),d'(a-\iota(\varphi(a))))\\
&=&(d,d')\Psi_X(a).
\end{eqnarray*}
For any $X_1=A_1\stackrel{\varphi_1}{\lon}A,\,X_2=A_2\stackrel{\varphi_2}{\lon}A\in(\Diffl/A)_\m$ and morphism $f:X_1\lon X_2$, we have
\begin{eqnarray*}
\Psi_{X_2} (f(a))&=&(\varphi_2(f(a)),f(a)-\iota_2(\varphi_2(f(a))))\\
&=&(\varphi_1(a),f(a)-\iota_2(\varphi_1(a)))\\
&=&(\varphi_1(a),f(a-\iota_1(\varphi_1(a))))\\
&=&(\Id_A,f)(\Psi_{X_1}(a)).
\end{eqnarray*}
Hence,  $\Psi$ is a natural isomorphism. The categories $(\Diffl/A)_\m$ and $(A,d_A)\bim$ are equivalent to each other.
\end{proof}

\section{Cohomology of differential algebras}\label{sec:cohomologyda}

 Let $V$ be a bimodule of an associative algebra $A$. Denote by $C^n_\alg(A,V)=\Hom(\otimes^n A,V)$. In particular, $C^0_\alg(A,V)=V.$ Recall that
  the  Hochschild cochain complex is the cochain complex $(C^*_\alg(A,V)=\oplus_{n=0}^\infty C^n_\alg(A,V),\partial)$, where the coboundary operator $$\partial: C^n_\alg(A, V)\longrightarrow  C^{n+1}_\alg(A, V)$$ is given by
\[\begin{split}
\partial  f(x_1,\dots,x_{n+1})&=x_1 f(x_2,\dots,x_{n+1})+\sum_{i=1}^n(-1)^if(x_1,\dots,x_ix_{i+1},\dots,x_{n+1})\\
&+(-1)^{n+1}f(x_1,\dots,x_n) x_{n+1}
\end{split}\]
 for all $f\in C^n_\alg(A, V),~x_1,\dots,x_{n+1}\in A$.  The corresponding  Hochschild cohomology is denoted by  $HH^*_\alg(A, V)$

 \subsection{Cohomology of differential operators}\label{sec:cohomologydo}

 Let $(A,d_A)$ be a differential algebra of weight $\lambda$  and let $(V, d_V)$ be a bimodule over $(A, d_A)$. In this subsection, we define the cohomology of differential operators.

First we make the following observation, noting that the bimodule structure coincides with the regular bimodule when the weight $\lambda$ is zero.
\begin{lemma}\label{le:bmd}
A differential algebra $(A,d_A)$  admits  a new bimodule structure on  $(V,d_V)$ given by:
\[x\vdash_\lambda v=(x+\lambda d_A(x))v,\quad v\dashv_\lambda x=v(x+\lambda d_A(x)),\quad \forall x \in A, v\in V.\]
\end{lemma}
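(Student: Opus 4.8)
The claim is that the formulas $x \vdash_\lambda v = (x + \lambda d_A(x))v$ and $v \dashv_\lambda x = v(x + \lambda d_A(x))$ define a new $A$-bimodule structure on $V$. The plan is to verify directly the three bimodule axioms: (a) $\rho_l$ is an algebra homomorphism, i.e. $x \vdash_\lambda (y \vdash_\lambda v) = (xy) \vdash_\lambda v$; (b) $\rho_r$ is an algebra anti-homomorphism, i.e. $(v \dashv_\lambda x) \dashv_\lambda y = v \dashv_\lambda (xy)$; and (c) the two actions commute, i.e. $(x \vdash_\lambda v) \dashv_\lambda y = x \vdash_\lambda (v \dashv_\lambda y)$. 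The key observation that makes all three work is that the map $\delta_\lambda \colon A \to A$, $x \mapsto x + \lambda d_A(x)$, is an algebra endomorphism of $A$: indeed, from the Leibniz identity \eqref{diff},
\[
\delta_\lambda(x)\delta_\lambda(y) = (x + \lambda d_A(x))(y + \lambda d_A(y)) = xy + \lambda\big(d_A(x)y + x d_A(y) + \lambda d_A(x)d_A(y)\big) = xy + \lambda d_A(xy) = \delta_\lambda(xy).
\]
So the new left action is just the old left action precomposed with the algebra endomorphism $\delta_\lambda$, and similarly on the right.

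Once $\delta_\lambda$ is identified as an algebra homomorphism, all three axioms follow formally. For (a): $x \vdash_\lambda (y \vdash_\lambda v) = \delta_\lambda(x)(\delta_\lambda(y) v) = (\delta_\lambda(x)\delta_\lambda(y))v = \delta_\lambda(xy)v = (xy)\vdash_\lambda v$, using associativity of the original bimodule and multiplicativity of $\delta_\lambda$. For (b): $(v \dashv_\lambda x) \dashv_\lambda y = (v\delta_\lambda(x))\delta_\lambda(y) = v(\delta_\lambda(x)\delta_\lambda(y)) = v\delta_\lambda(xy) = v \dashv_\lambda (xy)$. For (c): $(x \vdash_\lambda v) \dashv_\lambda y = (\delta_\lambda(x)v)\delta_\lambda(y) = \delta_\lambda(x)(v\delta_\lambda(y)) = x \vdash_\lambda (v \dashv_\lambda y)$, using the original compatibility $(xv)y = x(vy)$ with $x$ replaced by $\delta_\lambda(x)$ and $y$ by $\delta_\lambda(y)$. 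Linearity of $\vdash_\lambda$ and $\dashv_\lambda$ in each argument is immediate since $\delta_\lambda$ is linear and the original actions are bilinear.

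There is essentially no obstacle here; the only thing to get right is the one-line computation that $\delta_\lambda$ respects multiplication, which is exactly a rearrangement of the weight-$\lambda$ Leibniz rule. I would state this as a preliminary remark (or even inline), and then the three axioms are one line each. Finally, I would note the parenthetical remark in the lemma: when $\lambda = 0$ one has $\delta_0 = \mathrm{id}_A$, so $\vdash_0$ and $\dashv_0$ reduce to the original actions $\rho_l$, $\rho_r$, which is consistent with the stated fact that the new structure coincides with the regular bimodule structure in weight zero.
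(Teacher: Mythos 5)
Your verification of the associative bimodule axioms is correct, and the route is in fact cleaner than the paper's: the paper checks $x\vdash_\lambda(y\vdash_\lambda v)=(xy)\vdash_\lambda v$ directly from the weight-$\lambda$ Leibniz rule, whereas you isolate the single fact that $\delta_\lambda:=\mathrm{id}_A+\lambda d_A$ is an algebra endomorphism and pull the original actions back along it, which gives (a), (b), (c) formally. However, there is a genuine gap: the lemma claims a bimodule structure on $(V,d_V)$ over the \emph{differential} algebra $(A,d_A)$, not merely an $A$-bimodule structure on the space $V$. Besides the three associative axioms, one must check that $d_V$ remains compatible with the new actions, namely
\begin{align*}
d_V(x\vdash_\lambda v)&=d_A(x)\vdash_\lambda v+x\vdash_\lambda d_V(v)+\lambda\, d_A(x)\vdash_\lambda d_V(v),\\
d_V(v\dashv_\lambda x)&=v\dashv_\lambda d_A(x)+d_V(v)\dashv_\lambda x+\lambda\, d_V(v)\dashv_\lambda d_A(x),
\end{align*}
and this is exactly the half of the statement on which the paper's proof spends most of its effort; your proposal never addresses it, and it is this differential compatibility (not the associative axioms) that is used later, e.g.\ in the proof of Proposition~\ref{prop:delta}.

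The omission is repairable within your own framework, and you should say so explicitly: since $d_A\circ\delta_\lambda=\delta_\lambda\circ d_A$ (both equal $d_A+\lambda d_A^2$), applying the original differential-bimodule identity $d_V(xv)=d_A(x)v+xd_V(v)+\lambda d_A(x)d_V(v)$ with $x$ replaced by $\delta_\lambda(x)$ gives $d_V(x\vdash_\lambda v)=\delta_\lambda(d_A(x))v+\delta_\lambda(x)d_V(v)+\lambda\,\delta_\lambda(d_A(x))d_V(v)$, which is precisely the required identity, and the right-hand version is symmetric. With that paragraph added your argument is complete and arguably tidier than the paper's direct expansion; without it, you have only proved that $(V,\vdash_\lambda,\dashv_\lambda)$ is a bimodule over the associative algebra $A$.
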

\begin{proof}

Given $x,y\in A$ and $v\in V$, we have
\[\begin{split}
x\vdash_\lambda (y\vdash_\lambda v)&=(x+\lambda d_A(x))((y+\lambda d_A(y))v)\\
&=(xy+\lambda(xd_A(y)+d_A(x)y+\lambda d_A(x)d_A(y)))v\\
&=(xy+\lambda d_A(xy))v\\
&=(xy)\vdash_\lambda v.
\end{split}\]
Similarly, $(v\dashv_\lambda x)\dashv_\lambda y=v\dashv_\lambda (xy)$. Thus $(V,\vdash_\lambda,\dashv_\lambda)$ is a bimodule over the associative algebra $A$.

For $x\in A$ and $v\in V$,
\[\begin{split}
d_V(x\vdash_\lambda v)&=d_V((x+\lambda d_A(x))v)\\
&=d_V(xv)+\lambda d_V(d_A(x)v)\\
&=d_A(x)v+xd_V(v)+\lambda d_A(x)d_V(v)+\lambda (d_A(d_A(x))v+d_A(x)d_V(v)+\lambda d_A(d_A(x))d_V(v))\\
&=d_A(x)v+\lambda d_A(d_A(x))v+xd_V(v)+\lambda d_A(x)d_V(v)+\lambda  ( d_A(x)d_V(v)+\lambda d_A(d_A(x))d_V(v))\\
&=  d_A(x)\vdash_\lambda v+x\vdash_\lambda d_V(v)+\lambda d_A(x)\vdash_\lambda d_V(v).
\end{split}\]
Similarly, one shows the equality $$d_V( v\dashv_\lambda x)=v \dashv_\lambda d_A(x)+ d_V(v)\dashv_\lambda x+\lambda d_V(v) \dashv_\lambda d_A(x).$$  Also, it is obvious that $(x\vdash_\lambda v)\dashv_\lambda y=x\vdash_\lambda (v\dashv_\lambda y)$. Thus, $(V,\vdash_\lambda,\dashv_\lambda,d_V)$ is a bimodule over the differential algebra $(A,d_A)$.
\end{proof}	

For distinction, we let $V_\lambda$ denote the new  bimodule structure over $(A, d_A)$  given in Lemma~\ref{le:bmd}.

Denote by $C^n_\diffo(d_A,d_V)=\Hom(\otimes^n A,V)$, which is called the space of $n$-chains of the differential operator $d_A$ with coefficients in the bimodule $(V,d_V)$.
\begin{defn}The cohomology of the cochain complex $(C^*_\diffo(d_A,d_V)=\oplus_{n=0}^\infty C^n_\diffo(d_A,d_V),\partial_\lambda)$, denoted by $H^*_\diffo(d_A,d_V)$, is called the {\bf cohomology of the differential operator} $d_A$ with coefficients in the bimodule $(V,d_V)$, where
  $$\partial_\lambda:C^n_\diffo(d_A,d_V)\longrightarrow C^{n+1}_\diffo(d_A,d_V)$$ is the Hochschild coboundary operator of the associative algebra $A$ with the coefficients in the bimodule $V_\lambda$ given in Lemma \ref{le:bmd}. More precisely, we have
\[\begin{split}
\partial_\lambda f(x_1,\dots,x_{n+1})&=x_1\vdash_\lambda f(x_2,\dots,x_{n+1})+\sum_{i=1}^n(-1)^if(x_1,\dots,x_ix_{i+1},\dots,x_{n+1})\\
&+(-1)^{n+1}f(x_1,\dots,x_n)\dashv_\lambda x_{n+1}
\end{split}\]
for all $f\in C^n_\diffo(d_A, d_V),~x_1,\dots,x_{n+1}\in A$.
\end{defn}

\subsection{Cohomology of differential algebras} \label{sec:cohomologydasub}

We now combine the classical Hochschild cohomology of associative algebras and the newly defined cohomology of differential operators  to  define the cohomology of the differential algebra $(A,d_A)$ with coefficients in the bimodule $(V,d_V)$.

Define the set of $n$-cochains by
\begin{equation}\label{eq:dac}
C_{\Diffl}^n(A,V):=
\begin{cases}
C^n_\alg(A,V)\oplus C^{n-1}_\diffo(d_A,d_V),&n\geq1,\\
C^0_\alg(A,V)=V,&n=0.
\end{cases}
\end{equation}
Define a linear map $\partial_{\Diffl}:C_{\Diffl}^n(A,V)\rar C_{\Diffl}^{n+1}(A,V)$ by
\begin{eqnarray}\label{eq:pda}
\partial_{\Diffl}(f,g)&:=&(\partial f,\partial_\lambda g+(-1)^n\delta f),\quad \forall f\in C^n_\alg(A,V),\,g\in C^{n-1}_\diffo(d_A,d_V),\quad n\geq 1,\\
\label{eq:pda2}\partial_{\Diffl} {v}&:=&(\partial {v}, \delta v),\quad\forall {v}\in C^0_\alg(A,V)=V,
\end{eqnarray}
where the   linear map $\delta:C^n_\alg(A,V)\rar C^n_\diffo(d_A,d_V)$ is defined by
\begin{align*}
 \delta f(x_1,\dots,x_n):=\sum_{k=1}^n\lambda^{k-1}\sum_{1\leq i_1<\cdots<i_k\leq n}f(x_1,\dots,d_A(x_{i_1}),\dots,d_A(x_{i_k}),\dots,x_n)-d_V f(x_1,\dots,x_n),
\end{align*}
for any $f\in C^n_\alg(A,V)$, $  n\geq 1$ and $$\delta v= {-} d_V {(v)},\quad \forall v\in C^0_\alg(A,V)=V.$$

\begin{prop}\mlabel{prop:delta}
The linear map $\delta$ is a cochain map from the cochain complex $(C^*_\alg(A,V),\partial)$ to $(C^*_\diffo(d_A,d_V),\partial_\lambda)$.
\end{prop}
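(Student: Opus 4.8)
The plan is to verify directly that $\partial_\lambda \circ \delta = \delta \circ \partial$ as maps $C^n_\alg(A,V) \to C^{n+1}_\diffo(d_A,d_V)$, degree by degree, with the case $n=0$ checked separately since $C^0$ is handled by the formula $\delta v = -d_V(v)$. For $n=0$: one computes $\partial_\lambda(\delta v)$ on a single argument $x_1 \in A$ using the weight-$\lambda$ module structure $\vdash_\lambda, \dashv_\lambda$, and compares with $\delta(\partial v)(x_1)$; here one uses precisely the two bimodule compatibility identities for $(V,\rho_l,\rho_r,d_V)$, namely $d_V(xv) = d_A(x)v + x d_V(v) + \lambda d_A(x) d_V(v)$ and its right-handed analogue, which rearrange exactly into the $\vdash_\lambda$/$\dashv_\lambda$ form as in the proof of Lemma~\ref{le:bmd}.

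For general $n \ge 1$, I would expand both $\partial_\lambda(\delta f)$ and $\delta(\partial f)$ as sums indexed by (i) which tensor factor receives the outer multiplication/action in the Hochschild differential, and (ii) which subset $S \subseteq \{1,\dots,n+1\}$ (or $\{1,\dots,n\}$) of tensor slots gets hit by $d_A$, weighted by $\lambda^{|S|-1}$, plus the $-d_V$ tail term. The key algebraic input is the Leibniz-type identity \eqref{diff}: when a $d_A$ lands on a product slot $x_i x_{i+1}$ coming from the Hochschild differential, $d_A(x_i x_{i+1}) = d_A(x_i)x_{i+1} + x_i d_A(x_{i+1}) + \lambda d_A(x_i)d_A(x_{i+1})$ splits one term into three, and these three terms are exactly what is needed to reconcile the $\lambda^{k-1}$ and $\lambda^{k}$ strata between the two sides. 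Similarly, the two boundary terms $x_1 \vdash_\lambda (\cdots)$ and $(\cdots) \dashv_\lambda x_{n+1}$ absorb the cases where $d_A$ hits the first or last slot, matched against the module compatibility identities for $d_V$.

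The main obstacle is bookkeeping: organizing the double sum so that the telescoping/cancellation across the $\lambda$-graded pieces is transparent. I expect the cleanest route is to fix the subset $S$ of $d_A$-decorated slots and show that, for each $S$, the ``$S$-component'' of $\partial_\lambda \delta f$ minus the ``$S$-component'' of $\delta \partial f$ vanishes, using that within the $S$-component the expression looks like an ordinary Hochschild coboundary of $f$ with several arguments replaced by $d_A(x_i)$ — except at the interface slots where a $d_A(x_i x_{i+1})$ term from one stratum meets the $\lambda d_A(x_i)d_A(x_{i+1})$ term promoted from the stratum with one more $d_A$. Making this interface cancellation precise, i.e.\ checking that the ``extra'' $\lambda$-terms generated by \eqref{diff} on product slots coincide with the shift $S \mapsto S \cup \{\text{new slot}\}$ contributions, is the heart of the computation; everything else is the standard verification that two Hochschild-type differentials compose to zero.

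The remaining consistency points — that $\delta$ indeed lands in $C^*_\diffo(d_A,d_V)$ (immediate, since $C^n_\diffo(d_A,d_V) = \Hom(\otimes^n A, V) = C^n_\alg(A,V)$ as spaces), and that the $n=0 \to 1$ and $1 \to 2$ edge cases are consistent with the general formula — I would dispatch at the end once the general identity is in hand.
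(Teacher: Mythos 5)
Your plan coincides with the paper's own proof: the appendix establishes $\partial_\lambda\circ\delta=\delta\circ\partial$ by exactly this direct expansion, handling $n=0$ separately via the bimodule compatibilities for $d_V$, and for $n\geq 1$ organizing both sides by the set of $d_A$-decorated slots (the maps $f^{(i_1,\dots,i_k)}$) together with the position of the Hochschild multiplication, where the weight-$\lambda$ Leibniz rule splits $d_A(x_jx_{j+1})$ into three terms whose $\lambda$-piece is absorbed into the stratum with one more decoration --- precisely the interface cancellation you describe, carried out in Lemma~\ref{lem:111} and Eq.~\eqref{eq2}, with the boundary terms matched against $\vdash_\lambda$ and $\dashv_\lambda$ as you indicate. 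The only difference is that the paper executes this bookkeeping in full, so your outline is correct and what remains is the mechanical verification you defer.
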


The rather long and technical proof of this result is is postponed to the appendix in order not to interrupt the flow of the presentation.

\begin{remark}
Note that $C^n_\diffo(d_A,d_V)$ equals to $C^n_\alg(A,V)$ as linear spaces but they are equal as cochain complexes only when $\lambda=0$. When $\lambda$ is not zero, a new bimodule structure is needed to define $\partial_\lambda$ which eventually leads to the rather long and technical argument in order to establish the cochain map in Proposition~\mref{prop:delta}.
\mlabel{rk:nonzero}
\end{remark}

\begin{theorem}\label{thm: cochain complex for differential algebras}
The pair $(C_{\Diffl}^*(A, V),\partial_{\Diffl})$ is a cochain complex. So $\partial_{\Diffl}^2=0.$
\end{theorem}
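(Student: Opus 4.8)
The plan is to reduce the identity $\partial_{\Diffl}^2=0$ to three facts, and to note that two of them are classical while the third has already been isolated as Proposition~\mref{prop:delta}. The three facts are: (a) $\partial^2=0$ on the Hochschild cochain complex $(C^*_\alg(A,V),\partial)$; (b) $\partial_\lambda^2=0$ on $(C^*_\diffo(d_A,d_V),\partial_\lambda)$; and (c) $\delta$ is a cochain map, i.e.\ $\delta\circ\partial=\partial_\lambda\circ\delta$ in every degree. For (a) there is nothing to prove. For (b), recall that by definition $\partial_\lambda$ is precisely the Hochschild coboundary of the associative algebra $A$ acting on the bimodule $V_\lambda$ produced in Lemma~\mref{le:bmd}; since that lemma guarantees $(V,\vdash_\lambda,\dashv_\lambda)$ really is an $A$-bimodule, $(C^*_\diffo(d_A,d_V),\partial_\lambda)$ is a bona fide Hochschild complex and $\partial_\lambda^2=0$ follows again from the classical fact. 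For (c) I would simply invoke Proposition~\mref{prop:delta}; this is exactly the place where the long computation deferred to the appendix is needed, and I would not reproduce it here.

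Granting (a)--(c), the computation is short. Take $f\in C^n_\alg(A,V)$ and $g\in C^{n-1}_\diffo(d_A,d_V)$ with $n\geq 1$. Then $\partial_{\Diffl}(f,g)=(\partial f,\ \partial_\lambda g+(-1)^n\delta f)$ lies in $C^{n+1}_{\Diffl}(A,V)=C^{n+1}_\alg(A,V)\oplus C^{n}_\diffo(d_A,d_V)$, so applying $\partial_{\Diffl}$ once more --- now with the sign $(-1)^{n+1}$ --- gives
\[
\partial_{\Diffl}^2(f,g)=\bigl(\partial^2 f,\ \partial_\lambda(\partial_\lambda g+(-1)^n\delta f)+(-1)^{n+1}\delta(\partial f)\bigr)=\bigl(0,\ \partial_\lambda^2 g+(-1)^n\partial_\lambda\delta f+(-1)^{n+1}\delta\partial f\bigr).
\]
By (a) the first component vanishes; in the second component (b) kills $\partial_\lambda^2 g$, and (c) rewrites $(-1)^{n+1}\delta\partial f$ as $(-1)^{n+1}\partial_\lambda\delta f$, so the remaining two terms cancel. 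The case $n=0$ is handled in the same way: for $v\in V=C^0_\alg(A,V)$ one has $\partial_{\Diffl}v=(\partial v,\delta v)\in C^1_\alg(A,V)\oplus C^0_\diffo(d_A,d_V)$, and then $\partial_{\Diffl}(\partial v,\delta v)=(\partial^2 v,\ \partial_\lambda\delta v-\delta\partial v)=(0,0)$ by (a) and by the degree-$0$ instance of (c), where $\delta v=-d_V(v)$.

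I expect no genuine obstacle in this argument, since all the difficulty has been quarantined into Proposition~\mref{prop:delta}. The only things to be careful about are bookkeeping matters: tracking precisely which summand and which degree each intermediate term lands in, so that the correct sign $(-1)^n$ versus $(-1)^{n+1}$ is used at each of the two applications of $\partial_{\Diffl}$; and making sure the cochain-map identity of Proposition~\mref{prop:delta} is indeed available in degree $0$ as well as in positive degrees, so that the $n=0$ case closes up.
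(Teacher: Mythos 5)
Your proposal is correct and follows essentially the same route as the paper: the paper's proof likewise splits $\partial_{\Diffl}^2(f,g)$ into the components $(\partial^2 f,\ \partial_\lambda(\partial_\lambda g+(-1)^n\delta f)+(-1)^{n+1}\delta\partial f)$ (and the degree-$0$ case $(\partial^2 v,\ \partial_\lambda\delta v-\delta\partial v)$), killing the first component by $\partial^2=0$, the term $\partial_\lambda^2 g$ by Lemma~\mref{le:bmd} making $\partial_\lambda$ a genuine Hochschild coboundary, and the remaining two terms by the cochain-map identity of Proposition~\mref{prop:delta}, which indeed holds in degree $0$ as well. Your sign bookkeeping at the two applications of $\partial_{\Diffl}$ matches the paper's, so no changes are needed.
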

\begin{proof}
For any $v\in C^0_\alg(A, V)$, by Proposition~\mref{prop:delta} we have
\[\partial_{\Diffl}^2 v=\partial_{\Diffl}(\partial v,\delta v)=(\partial^2 v,\partial_\lambda\delta v-\delta\partial v)=0.\]
Given any $f\in C^n_\alg(A,V),\,g\in C^{n-1}_\diffo(d_A,d_V)$ with $n\geq1$, also  by Proposition~\mref{prop:delta} we have
\[\partial_{\Diffl}^2(f,g)=\partial_{\Diffl}(\partial f,\partial_\lambda g+(-1)^n\delta f)=(\partial^2 f,\partial_\lambda(\partial_\lambda g+(-1)^n\delta f)+(-1)^{n+1}\delta\partial f)=0.\]
Therefore,   $(C_{\Diffl}^*(A, V),\partial_{\Diffl})$ is a cochain complex.
\end{proof}

\begin{defn}
  The cohomology of the cochain complex $(C_{\Diffl}^*(A, V),\partial_{\Diffl})$, denoted by $H_{\Diffl}^*(A, V)$, is called the {\bf cohomology of the differential algebra} $(A,d_A)$ with coefficients in the bimodule $(V,d_V)$.
\end{defn}

To end this subsection, we compute 0-cocycles, 1-cocycles and 2-cocycles of the cochain complex $(C_{\Diffl}^*(A,V),\partial_{\Diffl})$.

It is obvious that for all $v\in V$, $\partial_{\Diffl}v=0$ if and only if
$$\partial v=0,\quad d_V(v)=0.$$

For all $(f,v)\in\Hom (A,V)\oplus V$,  $\partial_{\Diffl}(f,v)=0$ if and only if $\partial f=0,$ and
$$
x\vdash_\lambda v-v\dashv_\lambda x=f(d_A(x))-d_V(f(x)),\quad \forall x\in A.
$$

For all $(f,g)\in\Hom (\otimes^2A,V)\oplus\Hom (A,V) $,  $\partial_{\Diffl}(f,g)=0$ if and only if $\partial f=0,$ and
\begin{align*}
 x\vdash_\lambda g(y)-g(xy)+g(x)\dashv_\lambda y=  -\lambda f(d_A(x),d(y))-f(d_A(x),y)-f(x,d_A(y))+d_V(f(x,y)),\
\end{align*}
for all $x,y\in A.$

 In the next two sections, we shall need a subcomplex of the cochain complex $C_{\Diffl}^*(A, V)$. Let
\begin{equation}\label{eq:dacsub}
\tilde{C}_{\Diffl}^n(A,V):=
\begin{cases}
 C^n_\alg(A,V)\oplus C^{n-1}_\diffo(d_A,d_V),&n\geq2,\\
C^1_\alg(A,V),&n=1,\\
0,&n=0.
\end{cases}
\end{equation}
Then it is obvious that $(\tilde{C}_{\Diffl}^*(A,V)=\oplus_{n=0}^\infty\tilde{C}_{\Diffl}^n(A,V),\partial_{\Diffl} )$  is a subcomplex of the cochain complex  $(C_{\Diffl}^*(A, V),\partial_{\Diffl})$. We  denote its cohomology by $\tilde{H}_{\Diffl}^*(A,V)$. Obviously, $\tilde{H}_{\Diffl}^n(A,V)= {H}_{\Diffl}^n(A,V)$ for $n>2$.

\subsection{Relationship among the cohomologies}\label{sec:cohomologyrelation}

The coboundary operator $\partial_{\Diffl}$ can be illustrated by the following diagram:
 \[
\small{ \xymatrix{
\cdots
\longrightarrow C^n_\alg(A,V)\ar[dr]^{(-1)^n\delta} \ar[r]^{\qquad\partial} & C^{n+1}_\alg(A,V) \ar[dr]^{(-1)^{n+1}\delta} \ar[r]^{\partial\qquad}  &C^{n+2}_\alg(A,V)\longrightarrow\cdots  \\
\cdots\longrightarrow C^{n-1}_\diffo(d_A,d_V) \ar[r]^{\qquad\partial_\lambda} &C^{n}_\diffo(d_A,d_V)\ar[r]^{\partial_\lambda\qquad}&C^{n+1}_\diffo(d_A,d_V)\longrightarrow \cdots.}
}
\]

Thus we have
 \begin{prop}\label{pro:exactsequence}
There exists an exact sequence of cochain complexes,
\[0\rar C^{*-1}_\diffo(d_A, d_V)\stackrel{\iota}{\rar} C_{\Diffl}^*(A, V)\stackrel{\pi}{\rar} C^*_\alg(A, V)\rar 0,\]
where $\iota$ and $\pi$ are the   inclusion  and the projection respectively.
\end{prop}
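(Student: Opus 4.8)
The plan is to verify directly that the maps $\iota$ and $\pi$ are cochain maps and that the sequence is short exact in each degree; the exactness claim itself is essentially formal once the degree conventions are pinned down, so the real content is checking compatibility with the differentials.

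First I would fix the degree conventions. By the definition in \eqref{eq:dac}, $C_{\Diffl}^n(A,V) = C^n_\alg(A,V) \oplus C^{n-1}_\diffo(d_A,d_V)$ for $n \geq 1$, and $C_{\Diffl}^0(A,V) = C^0_\alg(A,V) = V$, so as graded vector spaces $C_{\Diffl}^*(A,V)$ is the direct sum of $C^*_\alg(A,V)$ and the shifted complex $C^{*-1}_\diffo(d_A,d_V)$ (where the latter is understood to be $0$ in degree $0$). I would define $\iota: C^{n-1}_\diffo(d_A,d_V) \rar C_{\Diffl}^n(A,V)$, $g \mapsto (0,g)$ for $n \geq 1$ and $\iota = 0$ in degree $0$, and $\pi: C_{\Diffl}^n(A,V) \rar C^n_\alg(A,V)$, $(f,g) \mapsto f$ (and $v \mapsto v$ in degree $0$). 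Degreewise exactness is then immediate: $\iota$ is injective, $\pi$ is surjective, and $\ker \pi = \{(0,g)\} = \im \iota$ in every degree; in degree $0$ everything is trivially consistent since $C^{-1}_\diffo = 0$.

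Next I would check that $\iota$ and $\pi$ commute with the coboundary operators. For $\pi$: applying the formula \eqref{eq:pda}, $\partial_{\Diffl}(f,g) = (\partial f, \partial_\lambda g + (-1)^n \delta f)$, so $\pi \partial_{\Diffl}(f,g) = \partial f = \partial \pi(f,g)$; and in degree $0$, $\partial_{\Diffl} v = (\partial v, \delta v)$ gives $\pi \partial_{\Diffl} v = \partial v$. Thus $\pi$ is a cochain map. For $\iota$: we must see that $\partial_{\Diffl}(0,g) = (0, \partial_\lambda g)$, which again follows directly from \eqref{eq:pda} since the $\delta f$ term vanishes when $f = 0$; hence $\partial_{\Diffl} \iota(g) = \iota(\partial_\lambda g)$, i.e. $\iota$ intertwines $\partial_\lambda$ (on the shifted complex, so up to the sign bookkeeping that is already absorbed into $\partial_\lambda$ acting on $C^{*-1}_\diffo$) with $\partial_{\Diffl}$. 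One small point to address is that the shift by $1$ does not introduce an extra sign: since $\iota$ only hits the second summand and $\partial_{\Diffl}$ restricted to that summand is literally $\partial_\lambda$ with no sign twist, no Koszul sign appears, so $(C^{*-1}_\diffo(d_A,d_V), \partial_\lambda)$ is genuinely a subcomplex.

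I do not expect a serious obstacle here: all three objects are already known to be cochain complexes — $(C^*_\alg(A,V),\partial)$ classically, $(C^*_\diffo(d_A,d_V),\partial_\lambda)$ by its definition as a Hochschild complex for the bimodule $V_\lambda$ of Lemma~\ref{le:bmd}, and $(C_{\Diffl}^*(A,V),\partial_{\Diffl})$ by Theorem~\ref{thm: cochain complex for differential algebras} — so the statement is a bookkeeping assembly of these facts. The only place to be careful is the degree-$0$ and degree-$1$ boundary behavior, where the ``$C^{n-1}_\diffo$'' slot is either absent or where $\delta$ has its special definition $\delta v = -d_V(v)$; I would simply note that in degree $0$ the sequence reads $0 \to 0 \to V \to V \to 0$ with $\pi = \id$, which is exact and compatible with the differentials by the degree-$0$ case just checked. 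I would then conclude that $0 \rar C^{*-1}_\diffo(d_A,d_V) \stackrel{\iota}{\rar} C_{\Diffl}^*(A,V) \stackrel{\pi}{\rar} C^*_\alg(A,V) \rar 0$ is a short exact sequence of cochain complexes, and remark that the associated long exact sequence in cohomology is what will be used in the sequel.
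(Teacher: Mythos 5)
Your proposal is correct and follows essentially the same route as the paper, which simply reads the short exact sequence off from the block form of $\partial_{\Diffl}$ in Eq.~\eqref{eq:pda} (illustrated by the diagram preceding the proposition) without writing out the verification. Your explicit checks that $\pi\partial_{\Diffl}(f,g)=\partial f$, that $\partial_{\Diffl}(0,g)=(0,\partial_\lambda g)$ so no sign twist arises on the shifted summand, and that each degree (including degree $0$, where $C^{-1}_\diffo=0$) is exact are exactly the bookkeeping the paper leaves implicit.
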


The relations among the various cohomology groups are given by the following theorem.

\begin{theorem}
  We have the following long exact sequence of cohomology groups,
\[\cdots \rar H^{n-1}_\diffo(d_A, d_V)\stackrel{\bar{\iota}}{\rar} H_{\Diffl}^n(A, V)\stackrel{\bar{\pi}}{\rar} HH^n_\alg(A, V)\stackrel{(-1)^n\bar{\delta}}{\rar} H^n_\diffo(d_A, d_V)\rar \cdots,\]
where $\bar{\delta} :HH^n_\alg(A, V)\rar H^n_\diffo(d_A, d_V)$ is given by $\bar{\delta}[f]=[\delta f]$. Here $[f]$ and $ [\delta f]$ denote  the cohomological classes of $f\in C^n_\alg(A,V)$ and  $\delta f\in C^n_\diffo(d_A,d_V)$.
\end{theorem}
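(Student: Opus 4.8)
The plan is to read off the long exact sequence from the short exact sequence of cochain complexes in Proposition~\ref{pro:exactsequence} by the standard zig-zag lemma of homological algebra, and then to pin down the connecting homomorphism by an explicit computation with the formula for $\partial_{\Diffl}$.

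Concretely, recall that every short exact sequence of cochain complexes $0\to C'\xrightarrow{\iota}C\xrightarrow{\pi}C''\to 0$ induces a long exact sequence $\cdots\to H^n(C')\to H^n(C)\to H^n(C'')\xrightarrow{\partial^n}H^{n+1}(C')\to\cdots$, in which the first two maps in each segment are induced by $\iota$ and $\pi$. Applying this to Proposition~\ref{pro:exactsequence}, and using that the complex $C^{*-1}_\diffo(d_A,d_V)$ has cohomology $H^{n-1}_\diffo(d_A,d_V)$ in degree $n$ while $C^*_\alg(A,V)$ has cohomology $HH^n_\alg(A,V)$, I obtain immediately a long exact sequence of exactly the shape asserted, with $\bar\iota$ and $\bar\pi$ the maps induced by the inclusion and projection. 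So the only thing left to verify is that the connecting map $HH^n_\alg(A,V)\to H^n_\diffo(d_A,d_V)$ is $(-1)^n\bar\delta$.

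To check this I would trace the construction of the connecting homomorphism. Let $f\in C^n_\alg(A,V)$ be a Hochschild cocycle, so $\partial f=0$. A $\bk$-linear lift of $f$ along $\pi$ is $(f,0)\in C^n_{\Diffl}(A,V)=C^n_\alg(A,V)\oplus C^{n-1}_\diffo(d_A,d_V)$ (the degree $0$ case is identical, using $C^0_{\Diffl}(A,V)=V$ and $C^{-1}_\diffo=0$). Applying $\partial_{\Diffl}$ and formula~(\ref{eq:pda}) gives
\[
\partial_{\Diffl}(f,0)=\bigl(\partial f,\ \partial_\lambda 0+(-1)^n\delta f\bigr)=\bigl(0,\ (-1)^n\delta f\bigr),
\]
which lies in the image of $\iota$ and corresponds to the cochain $(-1)^n\delta f\in C^n_\diffo(d_A,d_V)$. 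Hence the connecting homomorphism sends $[f]$ to $(-1)^n[\delta f]$. By Proposition~\ref{prop:delta}, $\delta$ is a cochain map, so $\partial_\lambda\delta f=\delta\partial f=0$, confirming $\delta f$ is a cocycle; and if $f=\partial h$ then $\delta f=\delta\partial h=\partial_\lambda\delta h$ is a coboundary, so $\bar\delta[f]:=[\delta f]$ is well defined. This identifies the connecting map with $(-1)^n\bar\delta$ and finishes the proof.

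I do not expect a genuine obstacle here: the whole argument is routine homological algebra, and the single point that needs care is the sign, which is forced by the $(-1)^n$ appearing in the definition~(\ref{eq:pda}) of $\partial_{\Diffl}$. It is perhaps worth emphasizing in the write-up that the section $f\mapsto(f,0)$ used to lift cocycles is only a linear splitting of $\pi$ and not a chain map — precisely the failure measured by $\delta$ via Proposition~\ref{prop:delta} (cf.\ Remark~\ref{rk:nonzero}) — and it is exactly this failure that produces the connecting homomorphism.
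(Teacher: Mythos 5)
Your proposal is correct and follows essentially the same route as the paper: apply the zig-zag/snake lemma to the short exact sequence of Proposition~\ref{pro:exactsequence}, then identify the connecting homomorphism by lifting a cocycle $f$ to $(f,0)$ and computing $\partial_{\Diffl}(f,0)=(0,(-1)^n\delta f)$ via Eq.~\eqref{eq:pda}. Your extra remarks (well-definedness of $\bar\delta$ from Proposition~\ref{prop:delta}, and the observation that $f\mapsto(f,0)$ is only a linear, not a chain, splitting) are sound elaborations of the same argument.
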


Thus the linear maps $\bar{\pi}$ establish a relationship between the cohomology groups of the differential algebra and those of the underlying algebra, with the error terms controlled by the cohomology groups of the differential operator. This is resemblance of the Mayer-Vietoris sequence.

\begin{proof}
By Proposition \ref{pro:exactsequence} and the Snake Lemma, we have the long exact sequence
\[\cdots \rar H^{n-1}_\diffo(d_A, d_V)\stackrel{\bar{\iota}}{\rar} H_{\Diffl}^n(A, V)\stackrel{\bar{\pi}}{\rar} HH^n_\alg(A, V)\stackrel{\Delta_n}{\rar} H^n_\diffo(d_A, d_V)\rar \cdots.\]
It remains to prove that the connecting homomorphism $\Delta_n:HH^n_\alg(A, V)\rar H^n_\diffo(d_A, d_V)$ are exactly $(-1)^n\bar{\delta} $. Indeed, by the construction of $\Delta_n$ and Eq.~\eqref{eq:pda}, for any $[f]\in HH^n_\alg(A, V)$, we have
\[\bar{\iota}\Delta_n([f])=[\partial_{\Diffl}(f,0)]
= [(0,(-1)^n\delta f)],\]
which implies that $\Delta_n=(-1)^n\bar{\delta}$.
\end{proof}

\section{Abelian extensions of differential algebras} \label{sec:ext}
In this section, we study abelian extensions of differential algebras and show that they are classified by the second cohomology, as one would expect of a good cohomology theory.

\begin{defn}
An {\bf abelian extension} of differential algebras is a short exact sequence of homomorphisms of differential algebras
\[\begin{CD}
0@>>> {V} @>i >> \hat{A} @>p >> A @>>>0\\
@. @V {d_V} VV @V d_{\bar{A}} VV @V d_A VV @.\\
0@>>> {V} @>i >> \hat{A} @>p >> A @>>>0
\end{CD}\]
such that $uv=0$ for all $u,v\in V.$
\end{defn}

We will call $(\hat{A},d_{\hat{A}})$ an abelian extension of $(A,d_A)$ by $(V,d_V)$.

\begin{defn}
Let $(\hat{A}_1,d_{\hat{A}_1})$ and $(\hat{A}_2,d_{\hat{A}_2})$ be two abelian extensions of $(A,d_A)$ by $(V,d_V)$. They are said to be {\bf isomorphic} if there exists an isomorphism of differential algebras $\zeta:(\hat{A}_1,d_{\hat{A}_1})\rar (\hat{A}_2,d_{\hat{A}_2})$ such that the following commutative diagram holds:
\[\begin{CD}
0@>>> {(V,d_V)} @>i >> (\hat{A}_1,d_{\hat{A}_1}) @>p >> (A,d_A) @>>>0\\
@. @| @V \zeta VV @| @.\\
0@>>> {(V,d_V)} @>i >> (\hat{A}_2,d_{\hat{A}_2}) @>p >> (A,d_A) @>>>0.
\end{CD}\]
\end{defn}

A {\bf section} of an abelian extension $(\hat{A},d_{\hat{A}})$ of $(A,d_A)$ by $(V,d_V)$ is a linear map $s:A\rar \hat{A}$ such that $p\circ s=\Id_A$.

Now for an abelian extension $(\hat{A},d_{\hat{A}})$ of $(A,d_A)$ by $(V,d_V)$ with a section $s:A\rar \hat{A}$, we define linear maps $\rho_l: A\to \mathrm{End}_\bk(V),~ x\mapsto (v\mapsto xv)$ and $\rho_r: A\to \mathrm{End}_\bk(V),~ x\mapsto (v\mapsto vx)$ respectively by
$$
xv:=s(x)v,\quad vx:=vs(x), \quad \forall x\in A, v\in V.
$$
\begin{prop}
  With the above notations, $(V,\rho_l,\rho_r,d_V)$ is a bimodule over the differential algebra $(A,d_A)$.
\end{prop}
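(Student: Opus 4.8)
The plan is to transport every bimodule axiom from the associative-algebra and differential-operator structure already carried by $\hat{A}$. Throughout I would identify $V$ with the ideal $i(V)=\ker p\subseteq\hat{A}$; the commutativity of the left-hand square of the extension says precisely that $d_{\hat{A}}$ restricts to $d_V$ on this ideal. The two facts to be used repeatedly are: (a) $s$ is a section, so $p(s(x))=x$ for every $x\in A$, whence $s(x)s(y)-s(xy)\in\ker p=V$ and $d_{\hat{A}}(s(x))-s(d_A(x))\in\ker p=V$; and (b) the abelian hypothesis $uv=0$ for all $u,v\in V$, which annihilates every product of two elements of $V$.

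First I would check that $(V,\rho_l,\rho_r)$ is a bimodule over the associative algebra $A$. Given $x,y\in A$ and $v\in V$, write $s(x)s(y)=s(xy)+w$ with $w\in V$; then, using associativity in $\hat{A}$ and $wv=0$,
\[(xy)v=s(xy)v=\bigl(s(x)s(y)-w\bigr)v=s(x)\bigl(s(y)v\bigr)=x(yv).\]
The mirror computation gives $v(xy)=(vx)y$, while the mixed associativity $(xv)y=x(vy)$ is immediate from associativity in $\hat{A}$ with no correction term needed. Hence $\rho_l$ is an algebra homomorphism, $\rho_r$ an anti-homomorphism, and the two are compatible, so $(V,\rho_l,\rho_r)$ is an $A$-bimodule.

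Next I would verify the two differential identities. Fix $x\in A$ and write $d_{\hat{A}}(s(x))=s(d_A(x))+w_x$ with $w_x\in V$; this is legitimate since $p(d_{\hat{A}}(s(x)))=d_A(p(s(x)))=d_A(x)$. Applying the weight-$\lambda$ Leibniz rule for $d_{\hat{A}}$ to the product $s(x)\cdot v$ with $v\in V$, and using $d_{\hat{A}}(v)=d_V(v)\in V$ together with $w_xv=0$ and $w_xd_V(v)=0$, all correction terms drop out and one is left with
\[d_V(xv)=d_{\hat{A}}\bigl(s(x)v\bigr)=d_A(x)v+x\,d_V(v)+\lambda\,d_A(x)\,d_V(v).\]
The identity $d_V(vx)=v\,d_A(x)+d_V(v)\,x+\lambda\,d_V(v)\,d_A(x)$ follows from the symmetric computation applied to $v\cdot s(x)$. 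Combined with the associative-module axioms, this exhibits $(V,\rho_l,\rho_r,d_V)$ as a bimodule over $(A,d_A)$.

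The only delicate point is the bookkeeping of the correction terms $s(x)s(y)-s(xy)$ and $d_{\hat{A}}(s(x))-s(d_A(x))$: since $s$ is merely linear it respects neither the multiplication nor the differential, but each defect lies in $V$ and is therefore killed the moment it is multiplied by an element of $V$. I do not anticipate any genuine obstacle; the whole argument reduces to repeated use of facts (a) and (b). One may note in passing that the same reasoning shows the resulting bimodule structure is independent of the chosen section, since two sections of $p$ differ by a linear map into $V$.
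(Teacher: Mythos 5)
Your proposal is correct and follows essentially the same route as the paper: both arguments rest on the observations that $s(x)s(y)-s(xy)$ and $d_{\hat{A}}(s(x))-s(d_A(x))$ lie in $V=\ker p$ and are therefore annihilated when multiplied by elements of $V$, and then transport the associativity and weight-$\lambda$ Leibniz rule from $\hat{A}$ down to the actions on $V$. The explicit bookkeeping with the defect terms $w$ and $w_x$ is just a more verbose rendering of the paper's one-line substitutions $s(xy)v=s(x)s(y)v$ and $d_{\hat{A}}(s(x))v=s(d_A(x))v$.
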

\begin{proof}
For any $x,y\in A,\,v\in V$, since $s(xy)-s(x)s(y)\in V$ implies $s(xy)v=s(x)s(y)v$, we have
\[\rho_l(xy)(v)=s(xy)v=s(x)s(y)v=\rho_l(x)\circ\rho_l(y)(v).\]
Hence, $\rho_l$ is an algebra homomorphism. Similarly, $\rho_r$ is an algebra anti-homomorphism. Moreover, $d_{\hat{A}}(s(x))-s(d_A(x))\in V$ means that  $d_{\hat{A}}(s(x))v=s(d_A(x))v$. Thus we have
\begin{align*}
d_V(xv)&=d_V(s(x)v)=d_{\hat{A}}(s(x)v)=
d_{\hat{A}}(s(x))v+s(x)d_{\hat{A}}(v)+\lambda d_{\hat{A}}(s(x))d_{\hat{A}}(v)\\
&=s(d_A(x))v+s(x)d_V(v)+\lambda s(d_A(x))d_V(v)\\
&=d_A(x)v+xd_V(v)+\lambda d_A(x)d_V(v).
\end{align*}
Hence, $(V,\rho_l,\rho_r,d_V)$ is a bimodule over $(A,d_A)$.
\end{proof}

We  further  define linear maps $\psi:A\otimes A\rar V$ and $\chi:A\rar V$ respectively by
\begin{align*}
\psi(x,y)&=s(x)s(y)-s(xy),\quad\forall x,y\in A,\\
\chi(x)&=d_{\hat{A}}(s(x))-s(d_A(x)),\quad\forall x\in A.
\end{align*}
We transfer the differential algebra structure on $\hat{A}$ to $A\oplus V$ by endowing $A\oplus V$ with a multiplication $\cdot_\psi$ and a differential operator $d_\chi$ defined by
\begin{align}
\label{eq:mul}(x,u)\cdot_\psi(y,v)&=(xy,xv+uy+\psi(x,y)),\,\forall x,y\in A,\,u,v\in V,\\
\label{eq:dif}d_\chi(x,v)&=(d_A(x),\chi(x)+d_V(v)),\,\forall x\in A,\,v\in V.
\end{align}

\begin{prop}\label{prop:2-cocycle}
The triple $(A\oplus V,\cdot_\psi,d_\chi)$ is a differential algebra   if and only if
$(\psi,\chi)$ is a 2-cocycle  of the differential algebra $(A,d_A)$ with the coefficient  in $(V,d_V)$.
\end{prop}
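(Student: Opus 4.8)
The plan is to translate the single algebraic condition ``$(A\oplus V,\cdot_\psi,d_\chi)$ is a differential algebra of weight $\lambda$'' into its constituent axioms and match them, term by term, against the two components of the $2$-cocycle condition $\partial_{\Diffl}(\psi,\chi)=0$ computed at the end of Section~\ref{sec:cohomologydasub}. Recall that $\partial_{\Diffl}(\psi,\chi)=(\partial\psi,\partial_\lambda\chi+\delta\psi)$, so $(\psi,\chi)$ is a $2$-cocycle if and only if $\partial\psi=0$ together with the displayed identity
\[
x\vdash_\lambda \chi(y)-\chi(xy)+\chi(x)\dashv_\lambda y=-\lambda\psi(d_A(x),d_A(y))-\psi(d_A(x),y)-\psi(x,d_A(y))+d_V(\psi(x,y)).
\]
So the strategy is: (1)~the associativity of $\cdot_\psi$ is classically equivalent to $\partial\psi=0$ (this is the standard Hochschild computation, which I would only indicate); (2)~the weight-$\lambda$ Leibniz identity \eqref{diff} for $d_\chi$ on $A\oplus V$ is equivalent to the second identity above; and (3)~conclude by combining (1) and (2).

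For step~(2), I would expand $d_\chi\big((x,u)\cdot_\psi(y,v)\big)$ and $d_\chi(x,u)\cdot_\psi(y,v)+(x,u)\cdot_\psi d_\chi(y,v)+\lambda\, d_\chi(x,u)\cdot_\psi d_\chi(y,v)$ using the definitions \eqref{eq:mul} and \eqref{eq:dif}, and compare the $V$-components (the $A$-components agree automatically because $d_A$ is a $\lambda$-derivation on $A$). On the left one gets $\chi(xy)+d_V\big(xv+uy+\psi(x,y)\big)$. On the right, collecting all $V$-entries and using the bimodule axioms for $(V,\rho_l,\rho_r,d_V)$ to rewrite terms like $d_V(xv)$, $d_V(uy)$, one finds that the terms involving $u$ and $v$ cancel precisely because $(V,d_V)$ is a differential bimodule, leaving an identity purely in $\chi$, $\psi$, $d_A$ and $d_V$. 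Rearranging, and recognizing that $x\vdash_\lambda\chi(y)=(x+\lambda d_A(x))\chi(y)=x\chi(y)+\lambda d_A(x)\chi(y)$ and similarly for $\chi(x)\dashv_\lambda y$, this collapses exactly to the second cocycle identity. One should also check separately the unital condition $d_\chi(1_{A\oplus V})=0$ when $A$ is unital, i.e.\ $\chi(1_A)=0$, which follows from $d_A(1_A)=0$ and $s(1_A)=1_{\hat A}$ (or can be absorbed into a normalization convention).

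The main obstacle is purely bookkeeping in step~(2): there are many $V$-valued terms (four from expanding $d_V$ on a product in $\hat A$-style, plus the $\psi$- and $\chi$-terms with their $\lambda$-twists), and one must verify that the $u,v$-dependent terms vanish identically using the bimodule-over-differential-algebra axioms, so that what remains is the clean $\chi$-$\psi$ identity. There is no conceptual difficulty — the computation is ``forced'' once one trusts that the coboundary $\partial_{\Diffl}$ was correctly set up in Section~\ref{sec:cohomologydasub} — but care is needed to keep the signs and the $\lambda\vdash_\lambda$/$\dashv_\lambda$ bookkeeping straight, exactly the same phenomenon flagged in Remark~\ref{rk:nonzero}. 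Since the analogous verification for the associativity/$\partial\psi$ half is entirely standard, I would present step~(1) in one line and devote the written proof to step~(2).
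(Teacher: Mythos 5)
Your proposal is correct and takes essentially the same approach as the paper: you match the associativity of $\cdot_\psi$ with $\partial\psi=0$ and the weight-$\lambda$ Leibniz rule for $d_\chi$ (after expanding $V$-components and cancelling the $u,v$-terms via the differential bimodule axioms) with $\partial_\lambda\chi+\delta\psi=0$, which is precisely the identity the paper records as Eq.~\eqref{eq:dc}. The paper merely states these two equivalences and leaves the expansion and the converse as a routine check, so your write-up is just a more detailed rendering of the same argument (the remark about $\chi(1_A)=0$ is unnecessary, as the proposition is used in the not-necessarily-unital setting).
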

\begin{proof}
If $(A\oplus V,\cdot_\psi,d_\chi)$ is a differential algebra, then the associativity of $\cdot_\psi$ implies
\begin{equation}
 \label{eq:mc}x\psi(y,z)-\psi(xy,z)+\psi(x,yz)-\psi(x,y)z=0.
\end{equation}Since $d_\chi$ satisfies \eqref{diff}, we deduce that
\begin{equation}
 \label{eq:dc}\chi(xy)-x\vdash_\lambda\chi(y)-\chi(x)\dashv_\lambda y +d_V(\psi(x,y))-\psi(d_A(x),y)-\psi(x,d_A(y))-\lambda \psi(d_A(x),d_A(y))=0.
\end{equation} Hence, $(\psi,\chi)$ is a  2-cocycle.

Conversely, if $(\psi,\chi)$ satisfies equalities~\eqref{eq:mc} and \eqref{eq:dc}, one can easily check that $(A\oplus V,\cdot_\psi,d_\chi)$ is a differential algebra.
\end{proof}

Now we are ready to classify abelian extensions of a differential algebra.

\begin{theorem}
Let $V$ be a vector space and  $d_V\in\End_\bk(V)$.
Then abelian extensions of a differential algebra $(A,d_A)$ by $(V,d_V)$ are classified by the second cohomology group ${\tilde{H}}_{\Diffl}^2(A,V)$ of $(A,d_A)$ with coefficients in the bimodule $(V,d_V)$.
\end{theorem}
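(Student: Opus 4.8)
The plan is to construct an explicit bijection between the set of isomorphism classes of abelian extensions of $(A,d_A)$ by $(V,d_V)$ and the cohomology group $\tilde{H}_{\Diffl}^2(A,V)$. First I would note that the fixed data of an abelian extension already includes the bimodule structure $(V,\rho_l,\rho_r,d_V)$ induced by any section, and that by the earlier proposition this structure does not depend on the choice of section (any two sections differ by a map $A\to V$, and since $V\cdot V=0$ the induced actions are unchanged). So the classification takes place for a fixed bimodule structure, which is exactly the setting of $\tilde{C}_{\Diffl}^*(A,V)$.

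The forward map sends an abelian extension, together with a choice of section $s$, to the pair $(\psi,\chi)\in \tilde{C}_{\Diffl}^2(A,V) = C^2_\alg(A,V)\oplus C^1_\diffo(d_A,d_V)$ defined by $\psi(x,y)=s(x)s(y)-s(xy)$ and $\chi(x)=d_{\hat A}(s(x))-s(d_A(x))$. By Proposition~\ref{prop:2-cocycle} this pair is a $2$-cocycle. The next step is to check that changing the section $s$ to another section $s'$ changes $(\psi,\chi)$ by a coboundary: writing $s'-s = \phi$ for some $\phi\in C^1_\alg(A,V)=\tilde{C}_{\Diffl}^1(A,V)$, a direct computation shows $(\psi',\chi') - (\psi,\chi) = \partial_{\Diffl}\phi$, using the explicit formula $\partial_{\Diffl}\phi=(\partial\phi, -\delta\phi)$ (note $\delta$ on $C^1$ involves $d_V\phi - \phi d_A$, matching the change in $\chi$). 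Hence the cohomology class $[(\psi,\chi)]\in\tilde{H}_{\Diffl}^2(A,V)$ is well defined. One must also verify that isomorphic extensions give the same class: an isomorphism $\zeta$ of extensions carries a section of one to a section of the other, reducing this to the section-independence just established.

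Conversely, given a $2$-cocycle $(\psi,\chi)$, Proposition~\ref{prop:2-cocycle} produces the differential algebra $(A\oplus V,\cdot_\psi,d_\chi)$, which with the obvious inclusion $V\hookrightarrow A\oplus V$ and projection $A\oplus V\twoheadrightarrow A$ is an abelian extension of $(A,d_A)$ by $(V,d_V)$; the splitting $s(x)=(x,0)$ recovers $(\psi,\chi)$. I would then check that cohomologous cocycles yield isomorphic extensions: if $(\psi',\chi')=(\psi,\chi)+\partial_{\Diffl}\phi$ for $\phi\in C^1_\alg(A,V)$, the map $\zeta\colon A\oplus V\to A\oplus V$, $(x,v)\mapsto (x,v+\phi(x))$, is an isomorphism of differential algebras compatible with $i$ and $p$ — multiplicativity uses $\partial\phi=\psi'-\psi$ and commutation with the differential uses $-\delta\phi = \chi'-\chi$. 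These two constructions are mutually inverse on the level of (iso classes vs.\ cohomology classes): starting from an extension, picking a section, forming the cocycle, and rebuilding $A\oplus V$ gives an extension isomorphic to the original via $(x,v)\mapsto s(x)+i(v)$ (this is an isomorphism precisely because $V\cdot V=0$ and the transported structure was defined to make it so).

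The main obstacle is bookkeeping rather than conceptual: one has to be careful that the $n=1$ and $n=2$ levels of the truncated complex $\tilde{C}_{\Diffl}^*(A,V)$ are used (so that a coboundary at degree $2$ comes from $C^1_\alg(A,V)$ alone, with no contribution from a degree-$0$ piece), and that the signs in $\partial_{\Diffl}(f,g)=(\partial f,\partial_\lambda g+(-1)^n\delta f)$ and in $\delta$ match the changes in $\psi$ and $\chi$ under a shift of section. The verification that $\zeta\colon (x,v)\mapsto(x,v+\phi(x))$ commutes with the differential operators $d_\chi$ and $d_{\chi'}$ is where the weight-$\lambda$ terms enter through $\vdash_\lambda,\dashv_\lambda$ inside $\partial_\lambda\phi$, so that step deserves the most care; everything else is a routine diagram chase.
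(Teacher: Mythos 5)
Your proposal is correct and follows essentially the same route as the paper: extract a $2$-cocycle $(\psi,\chi)$ from a section, show the class in $\tilde{H}_{\Diffl}^2(A,V)$ is unchanged under change of section (difference is $\partial_{\Diffl}\phi$) and under isomorphism of extensions (transport the section by $\zeta$), and conversely build $(A\oplus V,\cdot_\psi,d_\chi)$ from a cocycle with $\zeta(x,v)=(x,v+\phi(x))$ identifying cohomologous cocycles. Your extra remarks (section-independence of the bimodule structure via $V\cdot V=0$, and the explicit isomorphism $(x,v)\mapsto s(x)+i(v)$ closing the loop) are points the paper leaves implicit, but the argument is the same.
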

\begin{proof}
Let $(\hat{A},d_{\hat{A}})$ be an abelian extension of $(A,d_A)$ by $(V,d_V)$. We choose a section $s:A\rar \hat{A}$ to obtain a 2-cocycle $(\psi,\chi)$ by Proposition~\ref{prop:2-cocycle}. We first show that the cohomological class of $(\psi,\chi)$ does not depend on the choice of sections. Indeed, let $s_1$ and $s_2$ be two distinct sections providing 2-cocycles $(\psi_1,\chi_1)$ and $(\psi_2,\chi_2)$ respectively. We define $\phi:A\rar V$ by $\phi(x)=s_1(x)-s_2(x)$. Then
\begin{align*}
\psi_1(x,y)&=s_1(x)s_1(y)-s_1(xy)\\
&=(s_2(x)+\phi(x))(s_2(y)+\phi(y))-(s_2(xy)+\phi(xy))\\
&=(s_2(x)s_2(y)-s_2(xy))+s_2(x)\phi(y)+\phi(x)s_2(y)-\phi(xy)\\
&=(s_2(x)s_2(y)-s_2(xy))+x\phi(y)+\phi(x)y-\phi(xy)\\
&=\psi_2(x,y)+\partial\phi(x,y)
\end{align*}
and
\begin{align*}
\chi_1(x)&=d_{\hat{A}}(s_1(x))-s_1(d_A(x))\\
&=d_{\hat{A}}(s_2(x)+\phi(x))-(s_2(d_A(x))+\phi(d_A(x)))\\
&=(d_{\hat{A}}(s_2(x))-s_2(d_A(x)))+d_{\hat{A}}(\phi(x))-\phi(d_A(x))\\
&=\chi_2(x)+d_V(\phi(x))-\phi(d_A(x))\\
&=\chi_2(x)-\delta\phi(x).
\end{align*}
That is, $(\psi_1,\chi_1)=(\psi_2,\chi_2)+\partial_{\Diffl}(\phi)$. Thus $(\psi_1,\chi_1)$ and $(\psi_2,\chi_2)$ are in the same cohomological class  {in $\tilde{H}_{\Diffl}^2(A,V)$}.

Next we prove that isomorphic abelian extensions give rise to the same element in  {$\tilde{H}_{\Diffl}^2(A,V)$.} Assume that $(\hat{A}_1,d_{\hat{A}_1})$ and $(\hat{A}_2,d_{\hat{A}_2})$ are two isomorphic abelian extensions of $(A,d_A)$ by $(V,d_V)$ with the associated homomorphism $\zeta:(\hat{A}_1,d_{\hat{A}_1})\rar (\hat{A}_2,d_{\hat{A}_2})$. Let $s_1$ be a section of $(\hat{A}_1,d_{\hat{A}_1})$. As $p_2\circ\zeta=p_1$, we have
\[p_2\circ(\zeta\circ s_1)=p_1\circ s_1=\Id_{A}.\]
Therefore, $\zeta\circ s_1$ is a section of $(\hat{A}_2,d_{\hat{A}_2})$. Denote $s_2:=\zeta\circ s_1$. Since $\zeta$ is a homomorphism of differential algebras such that $\zeta|_V=\Id_V$, we have
\begin{align*}
\psi_2(x,y)&=s_2(x)s_2(y)-s_2(xy)=\zeta(s_1(x))\zeta(s_1(y))-\zeta(s_1(xy))\\
&=\zeta(s_1(x)s_1(y)-s_1(xy))=\zeta(\psi_1(x,y))\\
&=\psi_1(x,y)
\end{align*}
and
\begin{align*}
\chi_2(x)&=d_{\hat{A}_2}(s_2(x))-s_2(d_A(x))=d_{\hat{A}_2}(\zeta(s_1(x)))-\zeta(s_1(d_A(x)))\\
&=\zeta(d_{\hat{A}_1}(s_1(x))-s_1(d_A(x)))=\zeta(\chi_1(x))\\
&=\chi_1(x).
\end{align*}
Consequently, all isomorphic abelian extensions give rise to the same element in {$\tilde{H}_{\Diffl}^2(A,V)$}.

Conversely, given two 2-cocycles $(\psi_1,\chi_1)$ and $(\psi_2,\chi_2)$, we can construct two abelian extensions $(A\oplus V,\cdot_{\psi_1},d_{\chi_1})$ and  $(A\oplus V,\cdot_{\psi_2},d_{\chi_2})$ via equalities~\eqref{eq:mul} and \eqref{eq:dif}. If they represent the same cohomological class {in $\tilde{H}_{\Diffl}^2(A,V)$}, then there exists a linear map $\phi:A\to V$ such that $$(\psi_1,\chi_1)=(\psi_2,\chi_2)+\partial_{\Diffl}(\phi).$$ Define $\zeta:A\oplus V\rar A\oplus V$ by
\[\zeta(x,v):=(x,\phi(x)+v).\]
Then $\zeta$ is an isomorphism of these two abelian extensions.
\end{proof}

\begin{remark}
In particular,   any vector space $V$ with linear endomorphism $d_V$  can serve as a trivial bimodule of $(A,d_A)$. In this situation,  central extensions  of $(A,d_A)$ by $(V,d_V)$  are classified by the second cohomology group $H_{\Diffl}^2(A,V)$ of $(A,d_A)$ with the coefficient in the trivial bimodule $(V,d_V)$. Note that for a trivial bimodule $(V,d_V)$, since $\partial_\lambda v=0$ for all $v\in V$, we have $$H_{\Diffl}^2(A,V)=\tilde{H}_{\Diffl}^2(A,V).$$
\end{remark}

\section{Deformations of differential algebras}\label{sec:def}

 In this section, we study formal deformations  of a differential algebra. In particular, we show that if the second cohomology group $\tilde{H}^2_{\Diffl}(A,A)=0$, then the differential algebra  $(A,d_A)$ is rigid.

 Let $(A,d_A)$ be a  differential algebra. Denote by $\mu_A$ the multiplication of $A$.
Consider the 1-parameterized family
$$\mu_t=\sum_{i=0}^{\infty} \mu_i t^i, \, \, \mu_i\in C^2_{\Diffl}(A, A),\quad
 d_t=\sum_{i=0}^{\infty} d_i t^i, \, \, d_i\in C^1_\diffo(d_A, d_A).$$

\begin{defn}
A {\bf 1-parameter formal deformation} of a differential algebra $(A, d_A)$ is a pair $(\mu_t, d_t)$ which endows the $\bk[[t]]$-module $(A[[t]], \mu_t, d_t)$ with the differential algebra structure  {over $\bk[[t]]$} such that $(\mu_0, d_0)=(\mu_A, d_A)$.
\end{defn}

 Given any differential algebra $(A,d_A)$, interpret
$\mu_A$ and $d_A$ as the formal power series
$\mu_t$ and $d_t$ with $\mu_i=\delta_{i,0}\mu_A$ and $d_i=\delta_{i,0}d_A$ respectively for all $i\geq0$. Then $(A[[t]],\mu_A,d_A)$ is a 1-parameter formal deformation of $(A, d_A)$.

 The pair $(\mu_t, d_t)$ generates a 1-parameter formal deformation  of the differential algebra $(A, d_A)$ if and only if for all $x, y, z\in A$, the following equalities hold:
\begin{eqnarray}\label{equation: ass nonexpanded}
 \mu_t(\mu_t(x, y), z)&=&\mu_t(x, \mu_t(y, z)),\\
\label{equation: derivation nonexpanded}
 d_t(\mu_t(x, y))&=&\mu_t(d_t(x), y)+\mu_t(x, d_t(y))+\lambda \mu_t(d_t(x),  d_t(y)).
\end{eqnarray}
Expanding these equations and collecting coefficients of $t^n$, we see that Eqs.~\eqref{equation: ass nonexpanded} and \eqref{equation: derivation nonexpanded} are equivalent to the systems of equations:
\begin{eqnarray}\label{equation: ass}
\sum_i^n \mu_i(\mu_{n-i}(x,y),z)&=&\sum_i^n\mu_i(x,\mu_{n-i}(y,z)),\\
\label{df}
\sum_{k,l\geq0\atop k+l=n}d_l\mu_k(x,y)&=&\sum_{k,l\geq0\atop k+l=n}\left(\mu_k(d_l(x),y)+\mu_k(x,d_l(y))\right)+\lambda\sum_{k,l,m\geq0\atop k+l+m=n}\mu_k(d_l(x),d_m(y)).
\end{eqnarray}

 \begin{remark}For $n=0$, Eq.~\eqref{equation: ass} is equivalent to the associativity of $\mu_A$, and Eq.~\eqref{df} is equivalent to the fact that $d_A$ is a $\lambda$-derivation.
\end{remark}

 \begin{prop}\label{prop:fddco}
Let $(A[[t]], \mu_t, d_t)$ be a $1$-parameter formal deformation of a differential algebra $(A,d_A)$. Then $(\mu_1, d_1)$ is a 2-cocycle of the differential algebra $(A,d_A)$ with the coefficient  in the regular bimodule $(A,d_A)$.	
\end{prop}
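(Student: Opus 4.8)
The statement to prove is that, for a $1$-parameter formal deformation $(A[[t]],\mu_t,d_t)$ of $(A,d_A)$, the pair $(\mu_1,d_1)$ is a $2$-cocycle, i.e.\ $\partial_{\Diffl}(\mu_1,d_1)=0$ in $C^*_{\Diffl}(A,A)$ with coefficients in the regular bimodule. By the definition \eqref{eq:pda} of $\partial_{\Diffl}$, and since $\mu_1\in C^2_\alg(A,A)$ and $d_1\in C^1_\diffo(d_A,d_A)$, this amounts to checking two things: first that $\partial\mu_1=0$ in the Hochschild complex, and second that $\partial_\lambda d_1+\delta\mu_1=0$ in $C^2_\diffo(d_A,d_A)$ (here the sign $(-1)^n$ with $n=2$ is $+1$).

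The plan is to extract these two identities from the degree-$1$ (coefficient of $t^1$) components of the deformation equations \eqref{equation: ass} and \eqref{df}, taken at $n=1$. First I would write out Eq.~\eqref{equation: ass} for $n=1$: it reads
\[
\mu_0(\mu_1(x,y),z)+\mu_1(\mu_0(x,y),z)=\mu_0(x,\mu_1(y,z))+\mu_1(x,\mu_0(y,z)),
\]
and since $\mu_0=\mu_A$ is the multiplication of $A$, rearranging gives exactly $x\mu_1(y,z)-\mu_1(xy,z)+\mu_1(x,yz)-\mu_1(x,y)z=0$, which is $\partial\mu_1(x,y,z)=0$ by the formula for the Hochschild coboundary. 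This handles the first component. Second I would write out Eq.~\eqref{df} for $n=1$: the $k+l=1$ terms give $d_0\mu_1+d_1\mu_0$ on the left and $\mu_1(d_0-,-)+\mu_0(d_1-,-)+\mu_1(-,d_0-)+\mu_0(-,d_1-)$ from the first sum on the right, while the $\lambda$-sum with $k+l+m=1$ contributes $\lambda\big(\mu_1(d_0 x,d_0 y)+\mu_0(d_1 x,d_0 y)+\mu_0(d_0 x,d_1 y)\big)$. Substituting $\mu_0=\mu_A$, $d_0=d_A$ and regrouping, the terms carrying $\mu_A$ (i.e.\ products in $A$) involving $d_1$ assemble into $x\vdash_\lambda d_1(y)-d_1(xy)+d_1(x)\dashv_\lambda y$ using the definition of $\vdash_\lambda,\dashv_\lambda$ from Lemma~\ref{le:bmd} (precisely because $x\vdash_\lambda v=(x+\lambda d_A(x))v$ collects the $\mu_A(x,v)$ and $\lambda\mu_A(d_Ax,v)$ contributions), and this is $-\partial_\lambda d_1(x,y)$; the remaining terms, carrying $\mu_1$ and the operator $d_A=d_0$ in its various slots plus the $-d_V\mu_1$ term coming from $d_0\mu_1=d_A\mu_1$, assemble into $\delta\mu_1(x,y)$ according to its defining formula (with $n=2$: the $k=1$ term $\lambda^0$ gives $\mu_1(d_Ax,y)+\mu_1(x,d_Ay)$, the $k=2$ term $\lambda^1$ gives $\lambda\mu_1(d_Ax,d_Ay)$, minus $d_A\mu_1(x,y)$). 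Comparing with the explicit description of $2$-cocycles given just before \eqref{eq:dacsub} in the text confirms that the $n=1$ case of \eqref{df} is exactly the equation $\partial_\lambda d_1+\delta\mu_1=0$.

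The main obstacle is purely bookkeeping: correctly matching the redistributed terms of the $n=1$ deformation equation against the definitions of $\partial_\lambda$ and $\delta$, in particular making sure the $\lambda$-weighted pieces land inside the $\vdash_\lambda$/$\dashv_\lambda$ actions (for the $d_1$-terms) versus inside the $\lambda^{k-1}$-sum of $\delta$ (for the $\mu_1$-terms), and tracking the one overall sign. No deeper input is needed — once both $n=1$ identities are shown to coincide termwise with $\partial\mu_1=0$ and $\partial_\lambda d_1+\delta\mu_1=0$, the conclusion $\partial_{\Diffl}(\mu_1,d_1)=0$ is immediate from \eqref{eq:pda}, and one may invoke Proposition~\ref{prop:2-cocycle} (whose cocycle equations \eqref{eq:mc}, \eqref{eq:dc} are literally these two identities) as a cross-check rather than re-deriving them.
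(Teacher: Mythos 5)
Your proposal is correct and follows essentially the same route as the paper: the paper's proof likewise just reads off the coefficient of $t^1$ ($n=1$) in Eqs.~\eqref{equation: ass} and \eqref{df} and identifies them with $\partial\mu_1=0$ and $\partial_\lambda d_1+\delta\mu_1=0$, which by Eq.~\eqref{eq:pda} (with $n=2$, so the sign is $+1$) gives $\partial_{\Diffl}(\mu_1,d_1)=0$. One harmless slip: the assembled expression $x\vdash_\lambda d_1(y)-d_1(xy)+d_1(x)\dashv_\lambda y$ is $+\partial_\lambda d_1(x,y)$, not $-\partial_\lambda d_1(x,y)$, but your final identity $\partial_\lambda d_1+\delta\mu_1=0$ is the correct one and the argument stands.
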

\begin{proof}
For $n =1$, Eq.~\eqref{equation: ass} is equivalent to $\partial \mu_1=0$, and Eq.~\eqref{df} is equivalent to $$\partial_\lambda d_1+\delta \mu_1=0.$$
Thus for $n=1$, Eqs.~\eqref{equation: ass} and \eqref{df} imply that $(\mu_1,d_1)$ is a 2-cocycle.	
\end{proof}

If $\mu_t=\mu_A$ in the above $1$-parameter formal deformation of the differential algebra $(A,d_A)$, we obtain a $1$-parameter formal deformation of the differential operator $d_A$. Consequently, we have

 \begin{coro}\label{coro:fdo}
Let $ d_t $ be a $1$-parameter formal deformation of the differential operator  $ d_A $.  Then $d_1$ is a 1-cocycle of the differential operator  $d_A$ with coefficients  in the regular bimodule $(A,d_A)$.	
\end{coro}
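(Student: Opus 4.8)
The plan is to obtain this as the $\mu_t=\mu_A$ specialization of Proposition~\ref{prop:fddco}. First I would record that a $1$-parameter formal deformation $d_t=\sum_{i\geq 0}d_it^i$ of the differential operator $d_A$ is the same data as a $1$-parameter formal deformation of the differential algebra $(A,d_A)$ with undeformed multiplication: set $\mu_i=\delta_{i,0}\mu_A$ so that $\mu_t=\mu_A$. With this choice Equation~\eqref{equation: ass nonexpanded} is just the associativity of $\mu_A$ and holds automatically, so the only condition that $d_t$ must satisfy is Equation~\eqref{equation: derivation nonexpanded}, which is precisely the definition of a formal deformation of $d_A$. Hence $(A[[t]],\mu_A,d_t)$ is a $1$-parameter formal deformation of $(A,d_A)$.

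Now I would apply Proposition~\ref{prop:fddco} to this deformation: it tells us that $(\mu_1,d_1)$ is a $2$-cocycle of $(A,d_A)$ with coefficients in the regular bimodule. Since $\mu_t=\mu_A$ forces $\mu_1=0$, the pair $(0,d_1)\in C^2_\alg(A,A)\oplus C^1_\diffo(d_A,d_A)$ is a $2$-cocycle. Unwinding the coboundary~\eqref{eq:pda} at cochain degree $2$,
\[
\partial_{\Diffl}(0,d_1)=\bigl(\partial 0,\ \partial_\lambda d_1+(-1)^2\delta 0\bigr)=(0,\ \partial_\lambda d_1),
\]
so the cocycle condition collapses to $\partial_\lambda d_1=0$, which is exactly the statement that $d_1$ is a $1$-cocycle in the complex $(C^*_\diffo(d_A,d_A),\partial_\lambda)$, i.e.\ a $1$-cocycle of the differential operator $d_A$ with coefficients in the regular bimodule.

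Alternatively, the computation can be done directly without citing Proposition~\ref{prop:fddco}: expanding Equation~\eqref{equation: derivation nonexpanded} with $\mu_t=\mu_A$ and extracting the coefficient of $t^1$ (using $d_0=d_A$) gives
\[
d_1(xy)=d_1(x)\,y+x\,d_1(y)+\lambda\bigl(d_A(x)\,d_1(y)+d_1(x)\,d_A(y)\bigr),\qquad\forall\,x,y\in A,
\]
and one checks this is literally $\partial_\lambda d_1(x,y)=x\vdash_\lambda d_1(y)-d_1(xy)+d_1(x)\dashv_\lambda y=0$, where $x\vdash_\lambda v=(x+\lambda d_A(x))v$ and $v\dashv_\lambda x=v(x+\lambda d_A(x))$ are the twisted actions from Lemma~\ref{le:bmd} on the regular bimodule. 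I do not expect a real obstacle here: the whole content is the order-$t^1$ part of Equation~\eqref{df} with $\mu_k=\delta_{k,0}\mu_A$, and the only points requiring care are tracking the sign in~\eqref{eq:pda} and matching the $\lambda$-twisted actions against the $\lambda$-Leibniz terms of the deformation equation.
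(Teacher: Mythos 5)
Your proposal is correct and matches the paper's argument: the paper's proof is exactly your ``alternative'' route, namely specializing to $\mu_k=\delta_{k,0}\mu_A$ and reading off the coefficient of $t$ in Eq.~\eqref{df} (equivalently Eq.~\eqref{equation: derivation nonexpanded}), which is literally $\partial_\lambda d_1=0$. Your first route via Proposition~\ref{prop:fddco} with $\mu_1=0$ is just a repackaging of the same computation and is also valid.
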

\begin{proof}
In the special case when $n =1$, Eq.~\eqref{df} is equivalent to $\partial_\lambda d_1=0$, which implies that  $d_1$ is a 1-cocycle of the differential operator  $d_A$ with coefficients in the regular bimodule $(A,d_A)$.	
\end{proof}

\begin{defn}
The $2$-cocycle $(\mu_1,d_1)$ is called the {\bf infinitesimal} of the $1$-parameter formal deformation $(A[[t]],\mu_t,d_t)$ of $(A,d_A)$.
\end{defn}

\begin{defn}
Let $(A[[t]],\mu_t,d_t)$ and $(A[[t]],\bar{\mu}_t,\bar{d}_t)$ be $1$-parameter formal deformations of $(A,d_A)$. A
{\bf formal isomorphism} from $(A[[t]],\bar{\mu}_t,\bar{d}_t)$ to $(A[[t]],\mu_t,d_t)$ is a power series $\Phi_t=\sum_{i\geq0}\phi_it^i:A[[t]]\lon A[[t]]$, where $\phi_i:A\to A$ are linear maps with $\phi_0=\Id_A$, such that
\begin{eqnarray}
\Phi_t\circ\bar{\mu}_t&=& \mu_t\circ(\Phi_t\times\Phi_t),\\
\Phi_t\circ\bar{d}_t&=&d_t\circ\Phi_t.
\end{eqnarray}

Two $1$-parameter formal deformations $(A[[t]],\mu_t,d_t)$ and $(A[[t]],\bar{\mu}_t,\bar{d}_t)$ are said to be {\bf equivalent} if  there exists a formal isomorphism $\Phi_t:(A[[t]],\bar{\mu}_t,\bar{d}_t) \lon (A[[t]],\mu_t,d_t)$.
\end{defn}

\begin{theorem}
The infinitesimals of two equivalent $1$-parameter formal deformations of $(A,d_A)$ are in the same cohomology class {in $\tilde{H}_{\Diffl}^2(A,A)$}.
\end{theorem}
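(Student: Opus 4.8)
The plan is to compare the degree-$1$ coefficients in the two equations that define a formal isomorphism between the deformations, and to recognize the resulting difference of infinitesimals as a $\partial_{\Diffl}$-coboundary. So fix a formal isomorphism $\Phi_t = \sum_{i\geq 0}\phi_i t^i : (A[[t]],\bar{\mu}_t,\bar{d}_t)\lon(A[[t]],\mu_t,d_t)$ with $\phi_0 = \Id_A$ between two equivalent deformations, and write $(\mu_1,d_1)$ and $(\bar{\mu}_1,\bar{d}_1)$ for their infinitesimals.

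First I would extract the coefficient of $t^1$ in $\Phi_t\circ\bar{\mu}_t = \mu_t\circ(\Phi_t\times\Phi_t)$. Using $\phi_0 = \Id_A$ and $\mu_0 = \bar{\mu}_0 = \mu_A$, this yields, for all $x,y\in A$,
\[
\phi_1(xy) + \bar{\mu}_1(x,y) = \mu_1(x,y) + x\,\phi_1(y) + \phi_1(x)\,y ,
\]
so that $\bar{\mu}_1 - \mu_1 = \partial\phi_1$, the Hochschild coboundary of $\phi_1\in C^1_\alg(A,A)$ in the regular bimodule. Then I would do the same with $\Phi_t\circ\bar{d}_t = d_t\circ\Phi_t$: using $\phi_0 = \Id_A$ and $d_0 = \bar{d}_0 = d_A$, the degree-$1$ part is
\[
\phi_1(d_A(x)) + \bar{d}_1(x) = d_1(x) + d_A(\phi_1(x)),\qquad\forall x\in A,
\]
hence $\bar{d}_1 - d_1 = -\big(\phi_1\circ d_A - d_A\circ\phi_1\big) = -\,\delta\phi_1$, recalling that on a $1$-cochain the map $\delta$ of Proposition~\ref{prop:delta} acts by $\delta\phi_1(x) = \phi_1(d_A(x)) - d_A(\phi_1(x))$ for the regular bimodule (where $d_V = d_A$).

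Finally I would assemble these via the description of $\partial_{\Diffl}$ on the subcomplex. Since $\tilde{C}_{\Diffl}^1(A,A) = C^1_\alg(A,A)$, Eq.~\eqref{eq:pda} with $n=1$ gives $\partial_{\Diffl}(\phi_1) = (\partial\phi_1, -\delta\phi_1)$, and therefore
\[
(\bar{\mu}_1,\bar{d}_1) - (\mu_1,d_1) = (\partial\phi_1,\,-\delta\phi_1) = \partial_{\Diffl}(\phi_1)\in\tilde{C}_{\Diffl}^2(A,A).
\]
By Proposition~\ref{prop:fddco} both $(\mu_1,d_1)$ and $(\bar{\mu}_1,\bar{d}_1)$ are $2$-cocycles lying in $\tilde{C}_{\Diffl}^2(A,A) = C^2_\alg(A,A)\oplus C^1_\diffo(d_A,d_A)$, and $\phi_1\in\tilde{C}_{\Diffl}^1(A,A)$, so this exhibits the two infinitesimals as cohomologous already in the finer group $\tilde{H}_{\Diffl}^2(A,A)$, as claimed.

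I do not anticipate a genuine obstacle: the equations defining a formal isomorphism are linear in $d_t$, $\bar{d}_t$ and $\Phi_t$, so the quadratic weight-$\lambda$ term of the derivation rule contributes nothing at order $t^1$, and the whole argument is an elementary comparison of low-degree coefficients, entirely parallel to the change-of-section computation in the classification theorem of Section~\ref{sec:ext}. The only mild care needed is in tracking the sign between the algebra and differential-operator components, so that the pair $(\partial\phi_1,-\delta\phi_1)$ is correctly identified with $\partial_{\Diffl}(\phi_1)$ in the subcomplex $\tilde{C}_{\Diffl}^*(A,A)$ rather than in the full complex $C_{\Diffl}^*(A,A)$.
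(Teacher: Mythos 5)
Your proposal is correct and follows essentially the same route as the paper: extract the coefficient of $t$ from $\Phi_t\circ\bar{\mu}_t=\mu_t\circ(\Phi_t\times\Phi_t)$ and $\Phi_t\circ\bar{d}_t=d_t\circ\Phi_t$, obtaining $\bar{\mu}_1-\mu_1=\partial\phi_1$ and $\bar{d}_1-d_1=-\delta\phi_1$, and identify the pair with $\partial_{\Diffl}(\phi_1)=(\partial\phi_1,-\delta\phi_1)$ in the subcomplex $\tilde{C}_{\Diffl}^*(A,A)$. Your sign bookkeeping for the differential-operator component agrees with Eq.~\eqref{eq:pda} at $n=1$, so nothing is missing.
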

\begin{proof}
Let $\Phi_t:(A[[t]],\bar{\mu}_t,\bar{d}_t)\lon(A[[t]],\mu_t,d_t)$ be a formal isomorphism. For all $x,y\in A$, we have
\begin{eqnarray*}
\Phi_t\circ\bar{\mu}_t(x,y)&=& \mu_t\circ(\Phi_t\times\Phi_t)(x,y),\\
\Phi_t\circ\bar{d}_t(x)&=& d_t\circ\Phi_t (x).
\end{eqnarray*}
 Expanding the above identities and comparing the coefficients of $t$, we obtain
\begin{eqnarray*}
\bar{\mu}_1(x,y)&=&\mu_1(x,y)+\phi_1(x)y+x\phi_1(y)-\phi_1(xy),\\
\bar{d}_1(x)&=&d_1(x)+d_A(\phi_1(x))-\phi_1(d_A(x)).
\end{eqnarray*}
Thus, we have $$(\bar{\mu}_1,\bar{d}_1)=(\mu_1,d_1)+\partial_{\Diffl}(\phi_1),$$ which implies that $[(\bar{\mu}_1,\bar{d}_1)]=[(\mu_1,d_1)]$ in $\tilde{H}^2_{\Diffl}(A,A)$.
\end{proof}

\begin{defn}
A $1$-parameter formal deformation $(A[[t]],\mu_t,d_t)$ of $(A,d_A)$ is said to be {\bf trivial} if it is equivalent to the  deformation $(A[[t]],\mu_A,d_A)$, that is, there exists  $\Phi_t=\sum_{i\geq0}\phi_it^i:A[[t]]\lon A[[t]]$, where  $\phi_i:A\to A$ are linear maps with $\phi_0=\Id_A$, such that
\begin{eqnarray}
\Phi_t\circ\mu_t&=&\mu_A\circ(\Phi_t\times\Phi_t),\\
\Phi_t \circ d_t&=&d_A\circ\Phi_t.
\end{eqnarray}
 \end{defn}

\begin{defn}
A differential algebra $(A,d_A)$ is said to be {\bf rigid} if every $1$-parameter formal deformation  is trivial.
\end{defn}

\begin{theorem}
Regarding $(A,d_A)$ as the regular bimodule over itself, if $\tilde{H}^2_{\Diffl}(A,A)=0$, the differential algebra  $(A,d_A)$ is rigid.
\end{theorem}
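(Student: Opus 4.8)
The plan is to run the classical ``kill the lowest-order obstruction'' argument, adapted to the truncated complex $\tilde{C}_{\Diffl}^*(A,A)$. Let $(A[[t]],\mu_t,d_t)$ be a $1$-parameter formal deformation of $(A,d_A)$, say $\mu_t=\sum_{i\ge 0}\mu_it^i$ and $d_t=\sum_{i\ge 0}d_it^i$ with $(\mu_0,d_0)=(\mu_A,d_A)$. If $(\mu_i,d_i)=(0,0)$ for every $i\ge 1$ then the deformation is already $(A[[t]],\mu_A,d_A)$ and there is nothing to prove, so let $n\ge 1$ be the least index with $(\mu_n,d_n)\neq(0,0)$.

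First I would show that $(\mu_n,d_n)$ is a $2$-cocycle of $(A,d_A)$ with coefficients in the regular bimodule, lying in the subcomplex $\tilde{C}_{\Diffl}^*(A,A)$. This is the order-$n$ analogue of Proposition~\ref{prop:fddco}: extracting the coefficient of $t^n$ from the associativity identity~\eqref{equation: ass} and from the $\lambda$-derivation identity~\eqref{df}, and using $\mu_i=0=d_i$ for $1\le i\le n-1$ to annihilate every mixed summand, one is left with exactly $\partial\mu_n=0$ and $\partial_\lambda d_n+\delta\mu_n=0$. By~\eqref{eq:pda} these two identities assert precisely that $\partial_{\Diffl}(\mu_n,d_n)=(0,0)$. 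Since $\tilde{C}_{\Diffl}^2(A,A)=C^2_\alg(A,A)\oplus C^1_\diffo(d_A,d_A)=C_{\Diffl}^2(A,A)$, this cocycle already sits in the subcomplex, and the hypothesis $\tilde{H}^2_{\Diffl}(A,A)=0$ produces $\phi_n\in\tilde{C}_{\Diffl}^1(A,A)=C^1_\alg(A,A)=\Hom(A,A)$ with $(\mu_n,d_n)=\partial_{\Diffl}(\phi_n)=(\partial\phi_n,-\delta\phi_n)$; explicitly $\mu_n=\partial\phi_n$ and $d_n(x)=-\delta\phi_n(x)=d_A(\phi_n(x))-\phi_n(d_A(x))$. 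It is essential that $\phi_n$ can be taken in $C^1_\alg(A,A)$ alone: the degree-$n$ coefficient of a formal isomorphism is an element of $\Hom(A,A)$, so only $2$-cocycles that are coboundaries inside $\tilde{C}_{\Diffl}^*$ can be eliminated, whereas a coboundary arising from the bigger group $C_{\Diffl}^1(A,A)=C^1_\alg(A,A)\oplus C^0_\diffo(d_A,d_A)$ would carry an extra $\partial_\lambda$-term; this is exactly why the statement is phrased with $\tilde{H}^2_{\Diffl}$ rather than $H^2_{\Diffl}$.

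Next I would use $\phi_n$ to remove the degree-$n$ term. Set $\Phi_t:=\Id_A-\phi_nt^n$, a $\bk[[t]]$-linear automorphism of $A[[t]]$ with constant term $\Id_A$, and let $(\bar\mu_t,\bar d_t)$ be the deformation determined by $\Phi_t\circ\bar\mu_t=\mu_t\circ(\Phi_t\times\Phi_t)$ and $\Phi_t\circ\bar d_t=d_t\circ\Phi_t$; by construction $\Phi_t$ is a formal isomorphism, so $(\bar\mu_t,\bar d_t)$ is equivalent to $(\mu_t,d_t)$. Comparing coefficients of $t^i$ for $i\le n$ — the same bookkeeping as in the proof that equivalent deformations have cohomologous infinitesimals — yields $\bar\mu_i=\mu_i=0$ and $\bar d_i=d_i=0$ for $1\le i\le n-1$, and at order $n$, $\bar\mu_n=\mu_n-\partial\phi_n=0$ and $\bar d_n=d_n+\delta\phi_n=0$. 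Hence $(\mu_t,d_t)$ is equivalent to a deformation whose lowest nonzero correction term has degree at least $n+1$.

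Finally, iterating this step produces a sequence of formal isomorphisms $\Psi^{(1)},\Psi^{(2)},\dots$, each with constant term $\Id_A$ and differing from $\Id_A$ only in degrees $\ge n_k$ for some strictly increasing sequence $n_1<n_2<\cdots$, such that transporting $(\mu_t,d_t)$ successively along them kills more and more of the correction terms. Because the $n_k$ increase, the infinite composite $\Psi:=\Psi^{(1)}\circ\Psi^{(2)}\circ\cdots$ is a well-defined $\bk[[t]]$-linear automorphism of $A[[t]]$ with constant term $\Id_A$ (each coefficient of $t^m$ is altered by only finitely many factors), and it is a formal isomorphism from $(A[[t]],\mu_A,d_A)$ to $(A[[t]],\mu_t,d_t)$. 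Hence $(\mu_t,d_t)$ is equivalent to the trivial deformation, so $(A,d_A)$ is rigid. The only genuine computation is the order-$n$ coefficient extraction in the second and third paragraphs — the cocycle identity, and the matching of signs among $\partial$, $\delta$ and conjugation by $\Phi_t$ — while convergence of the iteration is the usual degree argument and presents no difficulty.
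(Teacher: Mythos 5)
Your proposal is correct and takes essentially the same route as the paper: show the lowest-order term $(\mu_n,d_n)$ is a $2$-cocycle in $\tilde{C}^2_{\Diffl}(A,A)$, use $\tilde{H}^2_{\Diffl}(A,A)=0$ to write it as a coboundary of some $\phi_n\in C^1_\alg(A,A)$, remove it by conjugating with $\Id_A\mp\phi_n t^n$, and iterate. The paper spells out only the case $n=1$ (with the opposite sign convention, $(\mu_1,d_1)=-\partial_{\Diffl}(\phi_1)$ and $\Phi_t=\Id_A+\phi_1 t$) and then says ``by repeating the argument''; your explicit general step and the convergence of the infinite composite are just a more detailed rendering of that same argument.
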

\begin{proof}
Let $(A[[t]],\mu_t,d_t)$ be a $1$-parameter formal deformation of $(A,d_A)$. By Proposition ~\ref{prop:fddco},   $(\mu_1,d_1)$ is a 2-cocycle. By $\tilde{H}^2_{\Diffl}(A,A)=0$, there exists a 1-cochain $\phi_1\in  C^1_\alg(A,A)$ such that
\begin{eqnarray}
\label{rigid}(\mu_1,d_1)=-\partial_{\Diffl}(\phi_1).
\end{eqnarray}
Then setting $\Phi_t=\Id_A+\phi_1 t$, we have a deformation $(A[[t]],\bar{\mu}_t,\bar{d}_t)$, where
\begin{eqnarray*}
\bar{\mu}_t(x,y)&=&\big(\Phi_t^{-1}\circ \mu_t\circ(\Phi_t\times\Phi_t)\big)(x,y),\\
\bar{d}_t(x)&=&\big(\Phi_t^{-1}\circ d_t\circ\Phi_t\big)(x).
\end{eqnarray*}
Thus, $(A[[t]],\bar{\mu}_t,\bar{d}_t)$ is equivalent to $(A[[t]],\mu_t,d_t)$. Moreover, we have
\begin{eqnarray*}
\bar{\mu}_t(x,y)&=&(\Id_A-\phi_1t+\phi_1^2t^{2}+\cdots+(-1)^i\phi_1^it^{i}+\cdots)(\mu_t(x+\phi_1(x)t,y+\phi_1(y)t)),\\
\bar{d}_t(x)&=&(\Id_A-\phi_1t+\phi_1^2t^{2}+\cdots+(-1)^i\phi_1^it^{i}+\cdots)(d_t(x+\phi_1(x)t)).
\end{eqnarray*}
Therefore,
\begin{eqnarray*}
\bar{\mu}_t(x,y)&=&xy+(\mu_1(x,y)+x\phi_1(y)+\phi_1(x)y-\phi_1(xy))t+\bar{\mu}_{2}(x,y)t^{2}+\cdots,\\
\bar{d}_t(x)&=&d_A(x)+(d_A(\phi_1(x))+d_1(x)-\phi_1(d_A(x)))t+\bar{d}_{2}(x)t^{2}+\cdots.
\end{eqnarray*}
By Eq.~\eqref{rigid}, we have
\begin{eqnarray*}
\bar{\mu}_t(x,y)&=&xy+\bar{\mu}_{2}(x,y)t^{2}+\cdots,\\
\bar{d}_t(x)&=&d_A(x)+\bar{d}_{2}(x)t^{2}+\cdots.
\end{eqnarray*}
Then by repeating the argument, we can show that $(A[[t]],\mu_t,d_t)$ is equivalent to $(A[[t]],\mu_A,d_A)$. Thus, $(A,d_A)$ is rigid.
\end{proof}

\section*{Appendix: Proof of Proposition~\mref{prop:delta}}

To simplify that notations, we use the abbreviation $x_{i,j}:=x_i,\dots, x_j,\,i\leq j,$
with the convention $x_{i,j}=1$ if $i>j$. For any $1\leq i_1<\cdots<i_k\leq n$ and $f\in C_\alg^n(A,V)$, define a function $f^{(i_1,\dots,i_k)}$ by
\begin{eqnarray*}
\lefteqn{f^{(i_1,\dots,i_k)}(x_1,\dots,x_n)}\\
&:=&f(x_1,\dots, x_{i_1-1}, d_A(x_{i_1}), x_{i_1+1}\dots, x_{i_2-1}, d_A(x_{i_2}), x_{i_2+1}\dots x_{i_k-1}, d_A(x_{i_k}), x_{i_k+1},\dots,x_n).
\end{eqnarray*}

In preparation for the proof of Proposition~\mref{prop:delta}, we first give two technical lemmas.
\begin{lemma}\label{lem:111}
For any $f\in C_\alg^n(A,V),\,x_1,\dots,x_{n+1}\in A$ with $n\geq1$, we have
\begin{align}
\nonumber&\sum_{k=1}^n\lambda^{k-1}\sum_{1\leq i_1<\cdots<i_k\leq n}\sum_{j=1}^n(-1)^jf^{(i_1,\dots,i_k)}(x_{1,j-1},x_jx_{j+1},x_{j+2,n+1}) \\
\nonumber =&\sum_{k=1}^{n}\lambda^{k-1}\sum_{1\leq i_1<\cdots<i_k\leq n+1}\sum_{1\leq j\leq n\atop j\neq i_r-1, i_r,\,\forall r}(-1)^jf(x_1, \dots, d_A(x_{i_1}), \dots, x_jx_{j+1},\dots, d_A(x_{i_k}), \dots, x_{n+1})\\
\nonumber  &+\sum_{k=1}^n\lambda^{k-1}\sum_{1\leq i_1<\cdots<i_k\leq n+1}\sum_{1\leq r\leq k\atop i_r-1\neq i_{r-1}}(-1)^{i_r-1}f(x_1,\dots,d_A(x_{i_1}),\dots,x_{i_r-1}d_A(x_{i_r}),\dots,d_A(x_{i_k}),\dots,x_{n+1})\\
\nonumber  &+\sum_{k=1}^n\lambda^{k-1}\sum_{1\leq i_1<\cdots<i_k\leq n+1}\sum_{1\leq r\leq k-1\atop i_r+1\neq i_{r+1}}(-1)^{i_r}f(x_1,\dots,d_A(x_{i_1}),\dots,d_A(x_{i_r})x_{i_r+1},\dots,d_A(x_{i_k}),\dots,x_{n+1})\\
\nonumber &+\sum_{k=2}^{n+1}\lambda^{k-1}\sum_{1\leq i_1<\cdots<i_k\leq n+1}\sum_{1\leq r\leq k-1\atop i_r+1= i_{r+1}}(-1)^{i_r}f(x_1,\dots,d_A(x_{i_1}),\dots,d_A(x_{i_r})d_A(x_{i_{r+1}}),\dots,d_A(x_{i_k}),\dots,x_{n+1})
\end{align}
\end{lemma}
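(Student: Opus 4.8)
The plan is to reorganize the left-hand sum by classifying the contracted pair $x_j x_{j+1}$ according to whether or not the merged slot is hit by $d_A$, and to invoke the defining identity \eqref{diff} of $d_A$ precisely when it is. Note that on the left-hand side the indices $1\le i_1<\cdots<i_k\le n$ label the $n$ slots of the tuple $(x_{1,j-1},x_jx_{j+1},x_{j+2,n+1})$, slot $j$ being the merged entry $x_jx_{j+1}$, so I split the summation over pairs $(\{i_1,\dots,i_k\},j)$ into the case $j\notin\{i_1,\dots,i_k\}$ and the case $j=i_s$ for a (unique) $s$.

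In the first case $f^{(i_1,\dots,i_k)}(x_{1,j-1},x_jx_{j+1},x_{j+2,n+1})$ is just $f$ evaluated at an honest $n$-tuple of elements of $A$: the entry $x_jx_{j+1}$ in slot $j$, and $d_A(x_{i_r})$ (if $i_r<j$) or $d_A(x_{i_r+1})$ (if $i_r>j$) elsewhere. I then re-index by translating each slot index $i_r$ into its position among $x_1,\dots,x_{n+1}$, namely $i_r\mapsto i_r$ if $i_r<j$ and $i_r\mapsto i_r+1$ if $i_r>j$, and keeping $j$ as the contraction position. One checks that this is a bijection onto the configurations $1\le i_1<\cdots<i_k\le n+1$, $1\le j\le n$ with $j\ne i_r-1,i_r$ for all $r$ (that is, neither member of the contracted pair is differentiated); the sign $(-1)^j$ and the weight $\lambda^{k-1}$ are untouched, so this case reproduces exactly the first sum on the right.

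In the second case, $j=i_s$, the merged slot carries a $d_A$, so the corresponding summand contains the factor $d_A(x_jx_{j+1})$. Applying \eqref{diff} rewrites this as $d_A(x_j)x_{j+1}+x_jd_A(x_{j+1})+\lambda\, d_A(x_j)d_A(x_{j+1})$, splitting the case into three pieces; re-indexing slot indices to positions exactly as above (now with $i_s=j$ pointing to the position of $x_j$), the piece $x_jd_A(x_{j+1})$ yields merged entries $x_{i_r-1}d_A(x_{i_r})$ with the undifferentiated factor on the left, i.e. the second sum on the right; the piece $d_A(x_j)x_{j+1}$ yields merged entries $d_A(x_{i_r})x_{i_r+1}$, i.e. the third sum; and the piece $\lambda\, d_A(x_j)d_A(x_{j+1})$, now carrying $d_A$ on both positions $j$ and $j+1$, yields merged entries $d_A(x_{i_r})d_A(x_{i_{r+1}})$ with $i_{r+1}=i_r+1$, and its extra factor $\lambda$ turns the weight $\lambda^{k-1}$ into $\lambda^{(k+1)-1}$ with the new parameter $k+1$ ranging over $2,\dots,n+1$ — this is the fourth sum. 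In each of the three pieces the sign $(-1)^j$ agrees with the displayed one, since $i_r-1=j$ in the first piece and $i_r=j$ in the other two.

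The main obstacle is entirely one of bookkeeping: one must check that each of the four re-indexings is a genuine bijection onto the index set of the corresponding right-hand sum — in particular that the side conditions ($j\ne i_r-1,i_r$; $i_r-1\ne i_{r-1}$; $i_r+1\ne i_{r+1}$; $i_{r+1}=i_r+1$, read with the convention $x_{i,j}=1$ for $i>j$ together with the natural end conventions on the $i_r$) are precisely the assertions that the surviving member of the contracted pair is, or is not, among the differentiated positions — and that the signs and powers of $\lambda$ are transported correctly. Once the dictionary between slot indices and positions among $x_1,\dots,x_{n+1}$ is pinned down, adding the four contributions gives the claimed identity.
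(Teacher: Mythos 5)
Your proposal follows exactly the route taken in the paper's own proof: split the left-hand sum according to whether the contracted slot $j$ is one of the differentiated positions $i_r$, apply the weight-$\lambda$ Leibniz rule \eqref{diff} to $d_A(x_jx_{j+1})$ in the latter case, and re-index slot labels to positions among $x_1,\dots,x_{n+1}$, with the signs and the shift of $\lambda^{k-1}$ to $\lambda^{(k+1)-1}$ for the $\lambda$-piece handled just as you describe. The one place where the bookkeeping you defer actually bites is the third sum: the piece $d_A(x_j)x_{j+1}$ also produces terms in which the differentiated left factor is the \emph{last} differentiated position (marked index $r=k$), e.g.\ the term $-f(d_A(x_1)x_2)$ when $n=1$, and such terms are not covered by the displayed range $1\le r\le k-1$; the re-indexing is in fact a bijection onto the index set with $1\le r\le k$ under the end convention $i_{k+1}=n+2$ (parallel to the convention $i_0=0$ implicit in the second sum). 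This is a slip in the displayed formula rather than in your strategy --- the paper's own proof converts $\sum_{r=1}^{k}$ into the printed range without comment --- but your assertion that each re-indexing is a bijection onto the index set of the corresponding right-hand sum should be amended accordingly.
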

\begin{proof}  In the second line of this proof, by convention $i_0=0, i_{k+1}=n+2$. By Eq.~\eqref{diff}, we have
\begin{align*}
&\sum_{k=1}^n\lambda^{k-1}\sum_{1\leq i_1<\cdots<i_k\leq n}\sum_{j=1}^n(-1)^jf^{(i_1,\dots,i_k)}(x_{1,j-1},x_jx_{j+1},x_{j+2,n+1})\\
=& \sum_{k=1}^n\hspace{-.1cm}
\lambda^{k-1}\hspace{-.65cm}
\sum_{1\leq i_1<\cdots<i_k\leq n}\hspace{-.2cm}
\sum_{1\leq j\leq n\atop {i_s<j<i_{s+1} \atop 0\leq s\leq k}}\hspace{-.25cm}
(-1)^j f( \dots, d_A(x_{i_1}), \dots, d_A(x_{i_s}), \dots, x_j x_{j+1}, \dots, d_A(x_{i_{s+1}+1}), \dots, d_A(x_{i_k+1}), \dots )\\
&+ \sum_{k=1}^n\lambda^{k-1}\sum_{1\leq i_1<\cdots<i_k\leq n} \sum_{r=1}^k (-1)^{i_r}f( \dots, d_A(x_{i_1}),\dots,  d_A(x_{i_r}x_{i_r+1}),\dots, d_A(x_{i_k+1}), \dots )\\
=& \sum_{k=1}^n\lambda^{k-1}\sum_{1\leq i_1<\cdots<i_k\leq n+1}\sum_{1\leq j\leq n\atop j\neq i_r, i_r-1,\,\forall r} (-1)^j f(  \cdots, d_A(x_{i_1}),    \dots, x_j x_{j+1}, \dots,    d_A(x_{i_k}), \dots )\\
&+ \sum_{k=1}^n\lambda^{k-1}\sum_{1\leq i_1<\cdots<i_k\leq n} \sum_{r=1}^k (-1)^{i_r}f(x_1,\dots, d_A(x_{i_1}),\dots,  d_A(x_{i_r})x_{i_r+1},\dots, d_A(x_{i_k+1}), \dots,  x_{n+1})\\
&+ \sum_{k=1}^n\lambda^{k-1}\sum_{1\leq i_1<\cdots<i_k\leq n} \sum_{r=1}^k (-1)^{i_r}f(x_1,\dots, d_A(x_{i_1}),\dots, x_{i_r}d_A(x_{i_r+1}),\dots, d_A(x_{i_k+1}), \dots,  x_{n+1})\\
&+ \sum_{k=1}^n\lambda^{k}\sum_{1\leq i_1<\cdots<i_k\leq n} \sum_{r=1}^k (-1)^{i_r}f(x_1,\dots, d_A(x_{i_1}),\dots,  d_A(x_{i_r})d_A(x_{i_r+1}),\dots, d_A(x_{i_k+1}), \dots,  x_{n+1})\\
=& \sum_{k=1}^n\lambda^{k-1}\sum_{1\leq i_1<\cdots<i_k\leq n+1}\sum_{1\leq j\leq n\atop j\neq i_r, i_r-1,\,\forall r} (-1)^j f(  \dots, d_A(x_{i_1}),    \dots, x_j x_{j+1}, \dots,    d_A(x_{i_k}), \dots )\\
  &+\sum_{k=1}^n\lambda^{k-1}\sum_{1\leq i_1<\cdots<i_k\leq n+1}\sum_{1\leq r\leq k\atop i_r-1\neq i_{r-1}}(-1)^{i_r-1}f(x_1,\dots,d_A(x_{i_1}),\dots,x_{i_r-1}d_A(x_{i_r}),\dots,d_A(x_{i_k}),\dots,x_{n+1})\\
  &+\sum_{k=1}^n\lambda^{k-1}\sum_{1\leq i_1<\cdots<i_k\leq n+1}\sum_{1\leq r\leq k-1\atop i_r+1\neq i_{r+1}}(-1)^{i_r}f(x_1,\dots,d_A(x_{i_1}),\dots,d_A(x_{i_r})x_{i_r+1},\dots,d_A(x_{i_k}),\dots,x_{n+1})\\
&+\sum_{k=1}^{n}\hspace{-.1cm}
\lambda^{k-1}\hspace{-.6cm}
\sum_{1\leq i_1<\cdots<i_{k+1}\leq n+1}\sum_{1\leq r\leq k\atop i_r+1= i_{r+1}}(-1)^{i_r}f(x_1,\dots,d_A(x_{i_1}),\dots,d_A(x_{i_r})d_A(x_{i_{r+1}}),\dots,d_A(x_{i_{k+1}}),\dots,x_{n+1})\\
=& \sum_{k=1}^n\lambda^{k-1}\sum_{1\leq i_1<\cdots<i_k\leq n+1}\sum_{1\leq j\leq n\atop j\neq i_r, i_r-1,\,\forall r} (-1)^j f(  \dots, d_A(x_{i_1}),    \dots, x_j x_{j+1}, \dots,    d_A(x_{i_k}), \dots )\\
  &+\sum_{k=1}^n\lambda^{k-1}\sum_{1\leq i_1<\cdots<i_k\leq n+1}\sum_{1\leq r\leq k\atop i_r-1\neq i_{r-1}}(-1)^{i_r-1}f(x_1,\dots,d_A(x_{i_1}),\dots,x_{i_r-1}d_A(x_{i_r}),\dots,d_A(x_{i_k}),\dots,x_{n+1})\\
  &+\sum_{k=1}^n\lambda^{k-1}\sum_{1\leq i_1<\cdots<i_k\leq n+1}\sum_{1\leq r\leq k-1\atop i_r+1\neq i_{r+1}}(-1)^{i_r}f(x_1,\dots,d_A(x_{i_1}),\dots,d_A(x_{i_r})x_{i_r+1},\dots,d_A(x_{i_k}),\dots,x_{n+1})\\
&+\sum_{k=2}^{n+1}\lambda^{k-1}\sum_{1\leq i_1<\cdots<i_k\leq n+1}\sum_{1\leq r\leq k-1\atop i_r+1= i_{r+1}}(-1)^{i_r}f(x_1,\dots,d_A(x_{i_1}),\dots,d_A(x_{i_r})d_A(x_{i_{r+1}}),\dots,d_A(x_{i_k}),\dots,x_{n+1}).
\qedhere\end{align*}
\end{proof}

\begin{lemma}
For any $f\in C_\alg^n(A,V),\,x_1,\dots,x_{n+1}\in A$ with $n\geq1$,
\begin{align}
&\sum_{k=1}^{n+1}\lambda^{k-1}\sum_{1\leq i_1<\cdots<i_k\leq n+1}(\partial f)^{(i_1, \dots, i_k)}(x_{1,{n+1}})-d_V (\partial f(x_{1,n+1}))\label{eq2}\\
\nonumber =&\sum_{k=1}^n\lambda^{k-1}\sum_{1\leq i_1<\cdots<i_k\leq n}x_1\vdash_\lambda f^{(i_1,\dots,i_k)}(x_{2,n+1})\\
\nonumber &+\sum_{k=1}^n\lambda^{k-1}\sum_{1\leq i_1<\cdots<i_k\leq n}\sum_{j=1}^n(-1)^jf^{(i_1,\dots,i_k)}(x_{1,j-1},x_jx_{j+1},x_{j+2,n+1})\\
\nonumber &+(-1)^{n+1}\sum_{k=1}^n\lambda^{k-1}\sum_{1\leq i_1<\cdots<i_k\leq n} f^{(i_1,\dots,i_k)}(x_{1,n})\dashv_\lambda x_{n+1}-x_1\vdash_\lambda d_V (f(x_{2,n+1}))\\
\nonumber &+\sum_{j=1}^n(-1)^{j-1}d_V(f(x_{1,j-1},x_jx_{j+1},x_{j+2,n+1}))
+(-1)^n d_V(f(x_{1,n}))\dashv_\lambda x_{n+1}.
\end{align}
\end{lemma}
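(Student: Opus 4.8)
The plan is to expand the left‑hand side of \eqref{eq2}, which by the definition of $\delta$ is exactly $\delta(\partial f)(x_{1,n+1})$, and to match it family‑by‑family with the right‑hand side (the right‑hand side, after substituting the formula for $\delta f$, is $\partial_\lambda(\delta f)(x_{1,n+1})$, so \eqref{eq2} is the cochain‑map identity $\delta\partial=\partial_\lambda\delta$ required for Proposition~\mref{prop:delta}). Write a derivation set as $S=\{i_1<\cdots<i_k\}\subseteq\{1,\dots,n+1\}$. Substituting the Hochschild formula $\partial f(x_{1,n+1})=x_1f(x_{2,n+1})+\sum_{j=1}^n(-1)^jf(x_{1,j-1},x_jx_{j+1},x_{j+2,n+1})+(-1)^{n+1}f(x_{1,n})x_{n+1}$ into each summand $(\partial f)^{(i_1,\dots,i_k)}(x_{1,n+1})$ — i.e.\ replacing $x_l$ by $d_A(x_l)$ for every $l\in S$ — produces three groups of terms, which I call the \emph{outer‑left}, \emph{inner}, and \emph{outer‑right} contributions according to which summand of $\partial f$ they arise from. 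Likewise $-d_V(\partial f(x_{1,n+1}))$ splits into three pieces, whose outer pieces I rewrite with the two $d_V$‑compatibility relations in the definition of a bimodule over $(A,d_A)$, together with $x\vdash_\lambda v=xv+\lambda d_A(x)v$ and $v\dashv_\lambda x=vx+\lambda vd_A(x)$ from Lemma~\mref{le:bmd}.

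For the inner contribution I sum over all $S$ and all split positions $j$ and distinguish four cases according to whether $j$ and/or $j+1$ lies in $S$: the collapsed argument is then $x_jx_{j+1}$, $x_jd_A(x_{j+1})$, $d_A(x_j)x_{j+1}$, or $d_A(x_j)d_A(x_{j+1})$. Checking that the signs $(-1)^j$ and the weights $\lambda^{|S|-1}$ line up (the last case carries one extra $d_A$ and forces $|S|\ge2$), these four subsums are precisely the four sums on the right‑hand side of Lemma~\mref{lem:111}. Invoking Lemma~\mref{lem:111} collapses the entire inner contribution into $\sum_{k=1}^n\lambda^{k-1}\sum_{1\le i_1<\cdots<i_k\le n}\sum_{j=1}^n(-1)^jf^{(i_1,\dots,i_k)}(x_{1,j-1},x_jx_{j+1},x_{j+2,n+1})$, the middle sum of the right‑hand side of \eqref{eq2}; and the inner piece of $-d_V(\partial f)$ gives directly $\sum_{j=1}^n(-1)^{j-1}d_V(f(x_{1,j-1},x_jx_{j+1},x_{j+2,n+1}))$.

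For the outer‑left contribution I split the $S$‑sum by whether $1\in S$. The terms with $1\notin S$ give $x_1\sum_{k=1}^n\lambda^{k-1}\sum_{1\le i_1<\cdots<i_k\le n}f^{(i_1,\dots,i_k)}(x_{2,n+1})$, and the terms with $1\in S$, after deleting the index $1$ (so $\lambda^{|S|-1}$ becomes $\lambda^{|S\setminus\{1\}|}$), give $\sum_{S'\subseteq\{2,\dots,n+1\}}\lambda^{|S'|}d_A(x_1)f^{(S')}(x_{2,n+1})$; here the $S'=\emptyset$ summand $d_A(x_1)f(x_{2,n+1})$ cancels against the $-d_A(x_1)f(x_{2,n+1})$ in $-d_V(x_1f(x_{2,n+1}))=-d_A(x_1)f(x_{2,n+1})-x_1\vdash_\lambda d_V(f(x_{2,n+1}))$, and the surviving terms combine via $x_1\vdash_\lambda v=x_1v+\lambda d_A(x_1)v$ into $\sum_{k=1}^n\lambda^{k-1}\sum_{1\le i_1<\cdots<i_k\le n}x_1\vdash_\lambda f^{(i_1,\dots,i_k)}(x_{2,n+1})$, leaving $-x_1\vdash_\lambda d_V(f(x_{2,n+1}))$. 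The mirror computation on the outer‑right contribution, using $d_V(vx)=vd_A(x)+d_V(v)\dashv_\lambda x$ and the cancellation of $(-1)^{n+1}f(x_{1,n})d_A(x_{n+1})$ at $S'=\emptyset$, yields $(-1)^{n+1}\sum_{k=1}^n\lambda^{k-1}\sum_{1\le i_1<\cdots<i_k\le n}f^{(i_1,\dots,i_k)}(x_{1,n})\dashv_\lambda x_{n+1}$ together with $(-1)^nd_V(f(x_{1,n}))\dashv_\lambda x_{n+1}$. Adding the six resulting expressions recovers the right‑hand side of \eqref{eq2}.

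The main obstacle is the inner contribution: one must make the four‑way case split rigorous and check that each subsum matches — in sign, in the range of the summation indices, and in the power of $\lambda$ — the corresponding sum on the right‑hand side of Lemma~\mref{lem:111}. The delicate point is the shift of argument positions caused by collapsing $x_jx_{j+1}$ into a single slot when a $d_A$ sits adjacent to it, which is precisely what Lemma~\mref{lem:111} is designed to absorb; once that identification is made, the rest — the two $S'=\emptyset$ cancellations and the reassembly through $\vdash_\lambda$ and $\dashv_\lambda$ — is routine bookkeeping.
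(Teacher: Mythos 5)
Your proposal is correct and follows essentially the same route as the paper's own proof: expand $\sum_{k}\lambda^{k-1}\sum_{S}(\partial f)^{(S)}$ and $d_V(\partial f)$ using the bimodule identities $d_V(xv)=d_A(x)v+x\vdash_\lambda d_V(v)$ and $d_V(vx)=vd_A(x)+d_V(v)\dashv_\lambda x$, identify the inner contributions with the four families of Lemma~\ref{lem:111} (applied right-to-left), and reassemble the outer contributions via $\vdash_\lambda$, $\dashv_\lambda$ after the two $S'=\emptyset$ cancellations. The bookkeeping of signs, $\lambda$-weights and the all-derived case is exactly as in the paper's appendix computation.
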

 \begin{proof}
 As $d_V(xv)=d_A(x)v+x\vdash_\lambda d_V(v),\,d_V(vx)=vd_A(x)+d_V(v)\dashv_\lambda x$ for any $v\in V,x\in A$, we have
\begin{align*}
d_V (\partial f(x_{1,n+1}))&=d_A(x_1) f(x_{2,{n+1}})+x_1\vdash_\lambda d_V (f(x_{2,n+1}))\\
&+\sum_{j=1}^n(-1)^jd_V(f(x_{1,j-1},x_jx_{j+1},x_{j+2,n+1}))\\
&+(-1)^{n+1}f(x_{1,{n}})d_A(x_{n+1})+(-1)^{n+1} d_V(f(x_{1,n}))\dashv_\lambda x_{n+1}.
\end{align*}
Hence, we only need to check Eq.~\eqref{eq2} as follows. By Lemma \ref{lem:111}, we have
\begin{align*}
 &\sum_{k=1}^{n+1}\lambda^{k-1}\sum_{1\leq i_1<\cdots<i_k\leq n+1}(\partial f)^{(i_1, \dots, i_k)}(x_{1,{n+1}}) \\
 =& \lambda^n \partial f(d_A(x_1),\dots, d_A(x_{n+1}))
 +\sum_{k=1}^{n}\lambda^{k-1}\sum_{1\leq i_1<\cdots<i_k\leq n+1}(\partial f)^{(i_1, \dots, i_k)}(x_{1,{n+1}})\\
 =&\lambda^n d_A(x_1)  f(d_A(x_2),\dots, d_A(x_{n+1})) +\lambda^n \sum_{i=1}^n (-1)^i f(d_A(x_1), \dots, d_A(x_i)d_A(x_{i+1}),\dots, d_A(x_{n+1})) \\
 &+(-1)^{n+1} \lambda^n  f(d_A(x_1),\dots, d_A(x_{n}))d_A(x_{n+1}) \\
  &+d_A(x_1) f(x_{2,{n+1}})+\sum_{k=2}^{n}\lambda^{k-1}\sum_{2\leq i_2<\cdots<i_k\leq n+1}d_A(x_1) f^{(i_2-1, \dots, i_k-1)}(x_{2,{n+1}})\\
  &+\sum_{k=1}^{n}\lambda^{k-1}\sum_{2\leq i_1<\cdots<i_k\leq n+1}x_1 f^{(i_1-1, \dots, i_k-1)}(x_{2,{n+1}})\\
  &+\sum_{k=1}^{n}\lambda^{k-1}\sum_{1\leq i_1<\cdots<i_k\leq n+1}\sum_{1\leq j\leq n\atop j\neq i_r-1, i_r,\,\forall r}(-1)^jf(x_1, \dots, d_A(x_{i_1}), \dots, x_jx_{j+1},\dots, d_A(x_{i_k}), \dots, x_{n+1})\\
  &+\sum_{k=1}^n\lambda^{k-1}\sum_{1\leq i_1<\cdots<i_k\leq n+1}\sum_{1\leq r\leq k\atop i_r-1\neq i_{r-1}}(-1)^{i_r-1}f(x_1,\dots,d_A(x_{i_1}),\dots,x_{i_r-1}d_A(x_{i_r}),\dots,d_A(x_{i_k}),\dots,x_{n+1})\\
  &+\sum_{k=1}^n\lambda^{k-1}\sum_{1\leq i_1<\cdots<i_k\leq n+1}\sum_{1\leq r\leq k-1\atop i_r+1\neq i_{r+1}}(-1)^{i_r}f(x_1,\dots,d_A(x_{i_1}),\dots,d_A(x_{i_r})x_{i_r+1},\dots,d_A(x_{i_k}),\dots,x_{n+1})\\
&+\sum_{k=1}^{n}\lambda^{k-1}\sum_{1\leq i_1<\cdots<i_k\leq n+1}\sum_{1\leq r\leq k-1\atop i_r+1= i_{r+1}}(-1)^{i_r}f(x_1,\dots,d_A(x_{i_1}),\dots,d_A(x_{i_r})d_A(x_{i_{r+1}}),\dots,d_A(x_{i_k}),\dots,x_{n+1})\\
&+(-1)^{n+1}f(x_{1,{n}})d_A(x_{n+1}) +\sum_{k=2}^{n}\lambda^{k-1}(-1)^{n+1}\sum_{1\leq i_1<\cdots<i_{k-1}\leq n}  f^{(i_1, \dots, i_{k-1})}(x_{1,{n}})d_A(x_{n+1})\\
 &+\sum_{k=1}^{n}\lambda^{k-1}(-1)^{n+1}\sum_{1\leq i_1<\cdots<i_k\leq n} f^{(i_1, \dots, i_k)}(x_{1,{n}})x_{n+1}\\
 =& \lambda^n d_A(x_1)  f(d_A(x_2),\dots, d_A(x_{n+1})) +(-1)^{n+1} \lambda^n  f(d_A(x_1),\dots, d_A(x_{n}))d_A(x_{n+1})  \\
  &+d_A(x_1) f(x_{2,{n+1}})+\sum_{k=1}^{n-1}\lambda^{k}\sum_{1\leq i_1<\cdots<i_k\leq n}d_A(x_1) f^{(i_1, \dots, i_k)}(x_{2,{n+1}})\\
  &+\sum_{k=1}^{n}\lambda^{k-1}\sum_{1\leq i_1<\cdots<i_k\leq n}x_1 f^{(i_1, \dots, i_k)}(x_{2,{n+1}})\\
  &+ \sum_{k=1}^{n}\lambda^{k-1}\sum_{1\leq i_1<\cdots<i_k\leq n+1}\sum_{1\leq j\leq n\atop j\neq i_r-1, i_r,\,\forall r}(-1)^jf(x_1, \dots, d_A(x_{i_1}), \dots, x_jx_{j+1},\dots, d_A(x_{i_k}), \dots, x_{n+1})\\
  &+\sum_{k=1}^n\lambda^{k-1}\sum_{1\leq i_1<\cdots<i_k\leq n+1}\sum_{1\leq r\leq k\atop i_r-1\neq i_{r-1}}(-1)^{i_r-1}f(x_1,\dots,d_A(x_{i_1}),\dots,x_{i_r-1}d_A(x_{i_r}),\dots,d_A(x_{i_k}),\dots,x_{n+1})\\
  &+\sum_{k=1}^n\lambda^{k-1}\sum_{1\leq i_1<\cdots<i_k\leq n+1}\sum_{1\leq r\leq k-1\atop i_r+1\neq i_{r+1}}(-1)^{i_r}f(x_1,\dots,d_A(x_{i_1}),\dots,d_A(x_{i_r})x_{i_r+1},\dots,d_A(x_{i_k}),\dots,x_{n+1})\\
&+\sum_{k=1}^{n+1}\lambda^{k-1}\sum_{1\leq i_1<\cdots<i_k\leq n+1}\sum_{1\leq r\leq k-1\atop i_r+1= i_{r+1}}(-1)^{i_r}f(x_1,\dots,d_A(x_{i_1}),\dots,d_A(x_{i_r})d_A(x_{i_{r+1}}),\dots,d_A(x_{i_k}),\dots,x_{n+1})\\
&+(-1)^{n+1}f(x_{1,{n}})d_A(x_{n+1}) +\sum_{k=1}^{n-1}\lambda^{k}(-1)^{n+1}\sum_{1\leq i_1<\cdots<i_{k}\leq n}  f^{(i_1, \dots, i_{k})}(x_{1,{n}})d_A(x_{n+1})\\
 &+\sum_{k=1}^{n}\lambda^{k-1}(-1)^{n+1}\sum_{1\leq i_1<\cdots<i_k\leq n} f^{(i_1, \dots, i_k)}(x_{1,{n}})x_{n+1}\\
  =&d_A(x_1) f(x_{2,{n+1}})+ (-1)^{n+1}f(x_{1,{n}})d_A(x_{n+1})  \\
  &+\sum_{k=1}^{n}\lambda^{k-1}\sum_{1\leq i_1<\cdots<i_k\leq n}x_1 f^{(i_1, \dots, i_k)}(x_{2,{n+1}})\\
 & + \sum_{k=1}^{n}\lambda^{k}\sum_{1\leq i_1<\cdots<i_k\leq n}d_A(x_1) f^{(i_1, \dots, i_k)}(x_{2,{n+1}})\\
  &+\sum_{k=1}^{n}\lambda^{k-1}\sum_{1\leq i_1<\cdots<i_k\leq n+1}\sum_{1\leq j\leq n\atop j\neq i_r-1, i_r,\,\forall r}(-1)^jf(x_1, \dots, d_A(x_{i_1}), \dots, x_jx_{j+1},\dots, d_A(x_{i_k}), \dots, x_{n+1})\\
  &+\sum_{k=1}^n\lambda^{k-1}\sum_{1\leq i_1<\cdots<i_k\leq n+1}\sum_{1\leq r\leq k\atop i_r-1\neq i_{r-1}}(-1)^{i_r-1}f(x_1,\dots,d_A(x_{i_1}),\dots,x_{i_r-1}d_A(x_{i_r}),\dots,d_A(x_{i_k}),\dots,x_{n+1})\\
  &+\sum_{k=1}^n\lambda^{k-1}\sum_{1\leq i_1<\cdots<i_k\leq n+1}\sum_{1\leq r\leq k-1\atop i_r+1\neq i_{r+1}}(-1)^{i_r}f(x_1,\dots,d_A(x_{i_1}),\dots,d_A(x_{i_r})x_{i_r+1},\dots,d_A(x_{i_k}),\dots,x_{n+1})\\
&+\sum_{k=2}^{n+1}\lambda^{k-1}\sum_{1\leq i_1<\cdots<i_k\leq n+1}\sum_{1\leq r\leq k-1\atop i_r+1= i_{r+1}}(-1)^{i_r}f(x_1,\dots,d_A(x_{i_1}),\dots,d_A(x_{i_r})d_A(x_{i_{r+1}}),\dots,d_A(x_{i_k}),\dots,x_{n+1})\\
& +\sum_{k=1}^{n}\lambda^{k}(-1)^{n+1}\sum_{1\leq i_1<\cdots<i_{k}\leq n}  f^{(i_1, \dots, i_{k})}(x_{1,{n}})d_A(x_{n+1})\\
 &+\sum_{k=1}^{n}\lambda^{k-1}(-1)^{n+1}\sum_{1\leq i_1<\cdots<i_k\leq n} f^{(i_1, \dots, i_k)}(x_{1,{n}})x_{n+1}\\
=&d_A(x_1) f(x_{2,{n+1}})+ (-1)^{n+1}f(x_{1,{n}})d_A(x_{n+1})  \\
 &+\sum_{k=1}^n\lambda^{k-1}\sum_{1\leq i_1<\cdots<i_k\leq n}x_1  \vdash_\lambda f^{(i_1,\dots,i_k)}(x_{2,n+1})\\
&+\sum_{k=1}^n\lambda^{k-1}\sum_{1\leq i_1<\cdots<i_k\leq n}\sum_{j=1}^n(-1)^jf^{(i_1,\dots,i_k)}(x_{1,j-1},x_jx_{j+1},x_{j+2,n+1})\\
&+(-1)^{n+1}\sum_{k=1}^n\lambda^{k-1}\sum_{1\leq i_1<\cdots<i_k\leq n} f^{(i_1,\dots,i_k)}(x_{1,n})\dashv_\lambda x_{n+1}.
\qedhere\end{align*} \end{proof}

{\bf Proof of  Proposition~\mref{prop:delta}.}
For any $v\in C_\alg^0(A,V)=V$ and $x\in A$, we have
\begin{align*}
\delta(\partial v)(x)&=\partial v(d_A(x))-d_V(\partial v(x))\\
&=d_A(x)v-vd_A(x)-d_V(x v-v x)\\
&=d_A(x)v-vd_A(x)\\
&\ \ \ \ -d_A(x) v-xd_V(v)-\lambda d_A(x)d_V(v)+d_V(v)x+v d_A(x)+\lambda d_V(v)d_A(x)\\
&=-xd_V(v)-\lambda d_A(x)d_V(v)+d_V(v)x+ \lambda d_V(v)d_A(x)\\
&=-x\vdash_\lambda d_V(v)+d_V(v)\dashv_\lambda x\\
&=x\vdash_\lambda\delta v-\delta v\dashv_\lambda x\\
&=\partial_\lambda (\delta v)(x).
\end{align*}

For any $f\in C_\alg^n(A,V),\,x_1,\dots,x_{n+1}\in A$ with $n\geq1$,
\begin{align*}
\partial_\lambda(\delta f)(x_{1,n+1})=&x_1\vdash_\lambda\delta f(x_{2,n+1})+\sum_{j=1}^n(-1)^j\delta f(x_{1,j-1},x_jx_{j+1},x_{j+2,n+1})
+(-1)^{n+1}\delta f(x_{1,n})\dashv_\lambda x_{n+1}\\
=&\sum_{k=1}^n\lambda^{k-1}\sum_{1\leq i_1<\cdots<i_k\leq n}x_1\vdash_\lambda f^{(i_1,\dots,i_k)}(x_{2,n+1})\\
&+\sum_{j=1}^n\sum_{k=1}^n\lambda^{k-1}\sum_{1\leq i_1<\cdots<i_k\leq n}(-1)^jf^{(i_1,\dots,i_k)}(x_{1,j-1},x_jx_{j+1},x_{j+2,n+1})\\
&+(-1)^{n+1}\sum_{k=1}^n\lambda^{k-1}\sum_{1\leq i_1<\cdots<i_k\leq n} f^{(i_1,\dots,i_k)}(x_{1,n})\dashv_\lambda x_{n+1}
-x_1\vdash_\lambda d_V(f(x_{2,n+1}))\\
&+\sum_{j=1}^n(-1)^{j-1}d_V(f(x_{1,j-1},x_jx_{j+1},x_{j+2,n+1}))+(-1)^n d_V (f(x_{1,n}))\dashv_\lambda x_{n+1}.
\end{align*}
On the other hand, we have
\begin{align*}
\delta(\partial f)(x_{1,n+1})=&\sum_{k=1}^{n+1}\lambda^{k-1}\sum_{1\leq i_1<\cdots<i_k\leq n+1}(\partial f)^{(i_1, \dots, i_k)}(x_{1,{n+1}})-d_V (\partial f(x_{1,n+1})).
\end{align*}
Hence, by Eq.~\eqref{eq2},   $\partial_\lambda \delta=\delta\partial$.

\smallskip

\noindent
{{\bf Acknowledgments.} This work is supported in part by Natural Science Foundation of China (Grant Nos. 11501214, 11771142, 11771190,  11671139)   and STCSM (Grant Nos. 13dz2260400).  We give our warmest thanks to Rong Tang for useful comments.

\end{document}